\def\r{\right}
\def\lf{\left}
\newcommand\newdot{{\kern.8pt\cdot\kern.8pt}}
\def\ptr {/\!/}
\def\e{{\rm e}}
\def\mathpal#1{\mathop{\mathchoice{\text{\rm #1}}%
   {\text{\rm #1}}{\text{\rm #1}}%
   {\text{\rm #1}}}\nolimits}
\newcommand\Hess{\mathpal{Hess}}
\newcommand\Ric{\mathpal{Ric}}
\newcommand\id{\mathpal{id}}
\newcommand\E{\mathbb{E}}
\newcommand\R{\mathbb{R}}
\newtheorem{theorem}{Theorem}[section]
\newtheorem{lemma}[theorem]{Lemma}
\newtheorem{corollary}[theorem]{Corollary}
\theoremstyle{definition}
\newtheorem{remark}[theorem]{Remark}
\numberwithin{equation}{section}
\begin{document}

\title{\vskip-2.1cm\bf\Large  Hessian matrix estimates of heat-type equations via Bismut-Stroock Hessian formula
  \footnotetext{\hspace{-0.35cm} 2010 {\it Mathematics Subject
      Classification}. Primary: 58J65; Secondary: 35K08.
    \endgraf {\it Keywords and phrases}.  Heat equation;  Hamilton's estimate; Hessian matrix estimate; curvature; Hessian formula
    
      \endgraf This work has been supported by
    Supported in part by the National Key R\&D Program of China (2022YFA1006000, 2020YFA0712900), NNSFC (11921001) and  Natural Science Foundation in China  (Grant No.1247011327). }}

\author[1]{Li-Juan Cheng}    \author[2]{Rui-Yu  Yang}

\affil[1]{\small School of Mathematics, Hangzhou Normal
  University,\par
  Hangzhou 311121, People's Republic of China\par
   \texttt{lijuan.cheng@hznu.edu.cn;}  \vspace{1em}}

  \affil[2]{\small Center for Applied Mathematics, Tianjin
      University,\par Tianjin 300072, People's Republic of China\par
      \texttt{yang\_ry@tju.edu.cn}\vspace{1em}}

\date{\today}
\maketitle

\begin{abstract} {\noindent   In this paper, we establish a new global Hessian matrix estimate for heat-type equations on Riemannian manifolds using a Bismut-type Hessian formula. Our results feature fully explicit coefficients as well as delay / growth rate functions. These estimates yield two key applications: a novel backward weak Harnack inequality and a precise pointwise Hessian estimate for eigenfunctions.}
\end{abstract}


\section{Introduction\label{s0}}

Let $(M, g)$ be a $d$-dimensional complete Riemannian manifold with 
\begin{equation}
    \Ric\geq -K,\quad  K\geq 0,
\end{equation}
and $u$ be a positive solution of the heat equation:
\begin{equation}\label{HE0}
\partial_tu(t,x)=\Delta u(t,x),
\end{equation}
where $\Delta$ is the usual Laplace-Beltrami operator with respect to the Levi-Civita connection of $M$. 
Hamilton  \cite{hamilton1993matrix} proved the following weakened Harnack inequality for positive solutions to \eqref{HE0}:
\[
 \frac{|\nabla u|^2}{u^2} \leq \left(2K + \frac{1}{t}\right)\log\frac{A}{u}
\]
where  \(A := \sup_{x\in M,t\geq 0} u(t,x)\).  Such estimates play a fundamental role in geometric analysis, particularly in the proof of Liouville-type theorems (e.g., classifying bounded or ancient solutions) and the analysis of singularity formation in geometric flows.
Li \cite{li2016hamilton} subsequently improved this result for solutions to the generalized heat equation
\begin{equation}\label{eq:heat_Z}
\partial_t u(t,x) = (\Delta + Z) u(t,x),
\end{equation} 
under the Bakry-Émery curvature condition $\Ric_Z := \Ric - \nabla Z^\flat \geq -K$, where $Z$ is a $C^1$ vector field and $Z^\flat$ is the dual 1-form of $Z$, given by $Z^{\flat}(v)=\langle Z, v \rangle$ for $v\in TM$. The refined estimate 
\begin{equation}\label{eq:Li_Harnack}
\frac{|\nabla u|^2}{u^2} \leq \frac{2K}{1 - \e^{-2Kt}} \log\frac{A}{u}
\end{equation}
provides a quantitative improvement over Hamilton's bound, particularly through the optimized time-dependent coefficient.

 For the second derivative of the  heat equation,   Hamilton \cite{hamilton1993matrix} established  a global lower bound for the Hessian matrices of log solutions to \eqref{HE0}  under certain curvature assumptions, which yielded matix Harnack estimates of solutions at distinct spatial points for fixed time slices. Building on the ideas in \cite{hamilton1993matrix},  Han and Zhang \cite{han2016upper}  proved the following Hessian bound:
\begin{equation}\label{eq:Hessian_estimate}
\frac{\mathrm{Hess}(u)_{ij}}{u} \leq \left(\frac{5}{t} + B\right)\left(1 + \log\frac{A}{u}\right),
\end{equation}
where $A := \sup_{x\in M,t\geq 0} u(t,x)$  as previously defined, $\mathrm{Hess}(u)_{ij}$ denotes Hessian components in an orthonormal frame $\{e_i\}$, and the constant $B$ depends on the $L^\infty$-norms of the curvature tensor $\mathrm{Rm}$ and its covariant derivative $\nabla\mathrm{Ric}$.
These results are further applied to derive a backward Harnack inequality. Furthermore, Li \cite{li2015li} 
generalized the Hessian matrix estimate to a positive solution to the following equation 
\begin{equation}\label{eq:heat_V}  
\partial_t u = \Delta u + \langle \nabla V, \nabla u \rangle,
\end{equation}  
 where $V$ is a smooth potential function. More precisely,  let $\Ric_V=\mathrm{Ric} - \mathrm{Hess}(V) $
 and  \begin{equation}\label{eq:Ric_V}  
\mathrm{Ric}_V^{m,d} := \mathrm{Ric} - \mathrm{Hess}(V) - \frac{\nabla V \otimes \nabla V}{m-d},  
\end{equation}  
where $m>d$ denotes the synthetic dimension and $d$ is the manifold dimension. 
Then, under the modified curvature bound $\mathrm{Ric}_V^{m,d} \geq -K$ with 
\[
\sup_M \left(|\mathrm{Rm}| + |\mathrm{Ric}_V| + |\nabla\mathrm{Ric}_V| + |V|^2\right) < \infty,
\]  
Li \cite{li2015li}  established the following inequality:   \begin{equation}\label{eq:Hessian_CD}  
\frac{\mathrm{Hess}(u)_{ij}}{u} \leq \left( \frac{5}{t}+B\right)\left(1 + \log\frac{A}{u}\right),  
\end{equation}  
with $A := \sup_{x\in M,t\geq 0} u(t,x)$ and constant $B = 10m^{3/2}d K_V$. Here the effective curvature scale 
\begin{equation}\label{eq:K_V}  
K_V := K_1 + K_2 + \sqrt{(K_1+K_2)K + K_2 + K_1 + \sup_M |V|^2}  
\end{equation}  
incorporates $K_1 := \sup_M (|\mathrm{Rm}| + |\mathrm{Ric}_V|)$ and $K_2 := \sup_M |\nabla\mathrm{Ric}_V|$.

In this work, we consider quantitative matrix Hessian bounds on complete manifolds $(M,g)$ without boundary  via probabilistic methods.
Let  $Z$ be a $C^1$-vector field. 
Define the operator $R(Z):T_xM \otimes T_xM\rightarrow T_xM$ by  
\[
\langle R(Z)(X,Y),U \rangle := \langle \mathrm{Rm}(Z,X)Y, U\rangle, \quad X,Y,U\in T_xM.
\]  
Define the curvature quantities:  for any orthonormal basis $\{e_i\}$ of $T_xM$, 
\begin{align*}
|R^{\sharp,\sharp}(X,Y)|(x) &:= \sqrt{\sum_{i,j} \langle \mathrm{Rm}(e_i, X)Y, e_j \rangle^2(x)}, \\
\mathbf{d}^* R(X,Y)(x) &:= \sum_{i,j} \nabla_{e_i} \mathrm{Rm}(e_j, X)Y(x).
\end{align*}  
Here, for any $(0,2)$-tensor $T$ and $X,Y\in TM$, its sharp version $T^\sharp$ is the $(1,1)$-tensor satisfying  
\(
\langle T^\sharp(X), Y\rangle = T(X,Y).
\)
Note that for any $(0,2)$-tensor $T$,  we write  $|T|\leq \ell$ for  some nonnegative function $\ell$ if 
$|T(X, Y)|\leq \ell \, |X|\cdot |Y| $ for $X,Y\in TM$. 
Our analysis requires the following global curvature conditions:  
\begin{mdframed}
Condition $(\mathbf{H})$: There exist constants $K, K_1, K_2$ such that  
\begin{align}\label{Hcondition}
&\mathrm{Ric}_Z \geq -K; \tag{H1} \\
&\sup_{x\in M} |R^{\sharp,\sharp}(\cdot,\cdot)|(x)  \leq K_1; \tag{H2} \\
&\sup_{x\in M}{ \big|(\nabla \mathrm{Ric}_Z + \mathbf{d}^*R - R(Z))^\sharp\big|(x) }\leq K_2. \tag{H3}
\end{align}
\end{mdframed}
Consider the associated heat equation:  
\begin{equation}\label{H-E}  
\partial_t u = (\Delta+Z)u.
\end{equation}  
Under Condition $(\mathbf{H})$, we establish a new global Hessian matrix estimate for a positive solution to \eqref{H-E}  via probabilistic methods:  
\begin{theorem}\label{add-mainth1}
Let \( u \) be a positive solution to \eqref{H-E}. Under Condition \((\mathbf{H})\), for any \( t > 0 \),  
\begin{align}\label{es.ham.hess.11}
&\frac{\mathrm{Hess}(u)_{ij} }{u}\leq \sqrt{\frac{8K}{3( 1-\e^{-2Kt})}\, \log\frac{A}{u}} \, \lf [ \frac{K_2}{2}t+\lf(\sqrt{2t}K_1+\sqrt{ \frac{12K}{1-\e^{-2Kt}} }\r) \lf(1+\sqrt{2\log\frac{A}{u}} \r) \r ],
\end{align}
with $A := \sup_{x\in M,t\geq 0} u(t,x)$.
\end{theorem}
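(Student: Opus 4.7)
The strategy is probabilistic, based on a Bismut--Stroock type representation of the Hessian of the semigroup $P_t^Z := \mathrm{e}^{t(\Delta + Z)}$. Writing $u(t,\cdot) = P_t^Z u_0$ for the initial data $u_0 := u(0,\cdot)$, one would establish a formula of the schematic shape
\begin{equation*}
\Hess(u(t,\cdot))(X,Y)(x) = \E\bigl[u_0(X_t)\cdot N_t(X,Y)\bigr],
\end{equation*}
where $X_t$ is the $(\Delta+Z)$-diffusion starting at $x$, and $N_t(X,Y)$ is a random variable built from the damped parallel transport $Q_s$ (which satisfies $\tfrac{\d}{\d s} Q_s = -\Ric_Z^{\sharp} Q_s$, hence $\|Q_s\|\leq \mathrm{e}^{Ks}$ under (H1)), double stochastic integrals, and curvature correction terms involving $R^{\sharp,\sharp}$ and $\nabla \Ric_Z + \mathbf{d}^* R - R(Z)$.

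First, I would derive this Hessian formula by differentiating a stochastic Feynman--Kac / Elworthy--Li representation twice in the spatial variable, using a martingale argument (It\^o's formula applied to $s\mapsto Q_s \nabla P_{t-s}^Z u_0$) to eliminate the gradient of $u$ from inside the expectation. The resulting $N_t(X,Y)$ splits into three pieces: a principal double stochastic integral, a single stochastic integral weighted by $R^{\sharp,\sharp}$, and a deterministic Bochner-type integral of $\nabla\Ric_Z + \mathbf{d}^*R - R(Z)$. Combining It\^o's isometry, the $\|Q_s\| \leq \mathrm{e}^{Ks}$ bound, and a linear-in-$s$ time profile for the integrand (which yields $\int_0^t(t-s)^2\,\d s/t^2 = t/3$), these three pieces should contribute $L^2$-bounds that match $\sqrt{12K/(1-\mathrm{e}^{-2Kt})}$ (from the principal term), $\sqrt{2t}\,K_1$ (via (H2)), and $K_2 t/2$ (via (H3)) respectively, thus reproducing the three summands inside the bracket of \eqref{es.ham.hess.11}.

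Next, to extract the $\sqrt{\log(A/u)}$ factor, I would apply the weighted Cauchy--Schwarz inequality
\begin{equation*}
\bigl|\E[u_0(X_t)\, N_t]\bigr| \leq \sqrt{\E[u_0(X_t)]}\,\sqrt{\E[u_0(X_t)\, N_t^2]} = \sqrt{u(t,x)}\,\sqrt{u(t,x)\,\E_{\nu}[N_t^2]},
\end{equation*}
where $\d\nu := u_0(X_t)\,\d\P/u(t,x)$ is a probability measure. The uniform upper bound $u_0\leq A$ combined with Li's sharp gradient estimate \eqref{eq:Li_Harnack}, applied along the diffusion trajectory via the martingale $s\mapsto u(t-s,X_s)$, then converts $\E_\nu$-moments of $N_t$ into polynomials in $\sqrt{\log(A/u)}$. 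This is what produces both the outer prefactor $\sqrt{\tfrac{8K}{3(1-\mathrm{e}^{-2Kt})}\log(A/u)}$ (the $8/3$ arising from the $1/3$ profile factor together with the doubling from Cauchy--Schwarz) and the inner multiplier $(1+\sqrt{2\log(A/u)})$ attached to the $K_1$ and stochastic-integral contributions.

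The main obstacle, in my view, is the derivation of the precise Bismut--Stroock Hessian formula whose curvature correction is exactly the combination $\nabla\Ric_Z + \mathbf{d}^*R - R(Z)$ appearing in (H3); this particular combination emerges only after applying a Weitzenb\"ock-type identity together with an integration by parts that cancels an otherwise-present $\nabla u$ term inside the expectation. A secondary challenge is the careful optimization of the integrand time profiles, the number of Cauchy--Schwarz applications, and the iterations of \eqref{eq:Li_Harnack} needed to yield the sharp constants $\tfrac{8}{3}$, $\sqrt{12}$, $\sqrt{2}$, and $\tfrac{1}{2}$ in the final bound \eqref{es.ham.hess.11}.
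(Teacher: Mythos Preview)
Your overall architecture is right: the paper does start from a Bismut--Stroock Hessian formula for $P_Tf$ whose curvature correction is exactly $(\nabla\Ric_Z^\sharp+\mathbf{d}^*R-R(Z))$, and the three bracketed contributions in \eqref{es.ham.hess.11} do come from (i) the iterated stochastic integral, (ii) the $R^{\sharp,\sharp}$ piece, and (iii) the $(\nabla\Ric_Z+\mathbf{d}^*R-R(Z))$ piece. But your mechanism for producing the $\log(A/u)$ dependence is a genuine gap.

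The weighted Cauchy--Schwarz step you propose gives $|\E[u_0(X_t)N_t]|\leq u\sqrt{\E_\nu[N_t^2]}$ with $\d\nu=u_0(X_t)\,\d\P/u$. The random variable $N_t$ is built purely out of $Q_s$, parallel transport and curvature; it does not involve $u$ at all. Hence $\E_\nu[N_t^2]=\E[u_0(X_t)N_t^2]/u\leq (A/u)\,\E[N_t^2]$, which yields growth like $\sqrt{A/u}$, not $\sqrt{\log(A/u)}$. Li's gradient estimate \eqref{eq:Li_Harnack} controls $|\nabla u|/u$, which never appears in $\E_\nu[N_t^2]$; invoking it ``along the trajectory'' does not convert moments of $N_t$ into polynomials of $\sqrt{\log(A/u)}$. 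The paper does \emph{not} use Cauchy--Schwarz here. Instead it applies the Young (Gibbs/entropy) inequality
\[
\E[XY]\ \leq\ \E\Big[X\log\frac{X}{\E X}\Big]+\E X\cdot\log\E\e^{Y}
\]
with $X=f(x_T^x)$ and $Y=\alpha^{-1}\times(\text{stochastic integral})$. The entropy term is bounded directly by $P_Tf\log(A/P_Tf)$, and $\log\E\e^{Y}$ is handled by the exponential--martingale inequality $\E\e^{M_t}\leq(\E\e^{\frac{p^2}{2(p-1)}[M]_t})^{(p-1)/p}$, iterated once (with $p=2$) for the double stochastic integral. Optimizing in $\alpha$ is what produces the $\sqrt{\log(A/u)}$ prefactor and the inner $(1+\sqrt{2\log(A/u)})$ multiplier; Li's estimate is not used in the proof of Theorem~\ref{add-mainth1}.

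A second, smaller discrepancy: the paper does not use a linear time profile. It takes $k(s)=\int_s^T\e^{-Kr}\,dr\big/\int_0^T\e^{-Kr}\,dr$, which gives $H:=\int_0^T\e^{Ks}\dot k(s)^2\,ds=K/(1-\e^{-KT})$ exactly (this is the source of the $K/(1-\e^{-2Kt})$ factor after $T=2t$) and $G:=\int_0^T\e^{Ks}k(s)^2\,ds\leq T/3$ (this is where the $1/3$ comes from, proved separately). A linear profile would not reproduce the $K/(1-\e^{-2Kt})$ dependence. Also, in the paper's normalization the generator is $\tfrac12(\Delta+Z)$, so $DQ_s=-\tfrac12\Ric_Z^\sharp(Q_s)\,ds$ and $|Q_s|\leq\e^{Ks/2}$; your $\|Q_s\|\leq\e^{Ks}$ is off by this factor of $2$.
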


\noindent
Note that compared with the results in \eqref{eq:Hessian_estimate} and  \eqref{eq:Hessian_CD}, 
this result is sharp when $f$ is a constant function, and the coefficients depending on $K, K_1, K_2$ and time decay/growth rates are fully explicit. 
Moreover, compared with \eqref{eq:Hessian_CD}, our results provide point-wise bounds for \(\frac{\text{Hess}\, u}{u}\) while relaxing the curvature-dimension condition \(\text{CD}(K,m)\) to a lower Bakry-Émery curvature bound, eliminating dimensional constraints.
In particular,  under Condition $(\mathbf{H})$, 
Theorem \ref{add-mainth1}  yields a clean bound, i.e.  for any $t>0$,
\begin{equation}
    \frac{ \mathrm{Hess}(u)_{ij}}{ {u}}\leq (\sqrt{6}+2)\Big( B+\frac{1}{t}\Big) \lf(1+\log\frac{A}{u}\r), 
\end{equation}
where 
\[B:=\frac{\sqrt{6K_2}-\sqrt{3K_2}}{12}+\frac{\sqrt{6}}{3}K_1+2(K^+\vee K_2+K^+).\]

Theorem \ref{add-mainth1} is further applied to establish a weak Harnack inequality (see Corollary \ref{hessian-eigenfunction})
and   leads to a pointwise estimate for the eigenfunctions of  operator $\Delta+Z$ as well. 
Let $(\phi,\lambda) \in \text{Eig}(\Delta +Z)$ satisfy
\[
(\Delta+Z)\phi = \lambda \phi.
\]
Then under the condition $(\bf H)$, the Hessian matrix estimate \eqref{es.ham.hess.11} yields 
\begin{equation*}
    \frac{\mathrm{Hess}(\phi)_{ij} }{\phi} \leq (2+\sqrt{2}) \lf( \frac{ K_2 }{4} \sqrt{\frac{1}{3\lambda_1} + \frac{K^+}{3\lambda_1^2}}+2K_1 \sqrt{\frac{1}{3}+\frac{K^+}{3\lambda_1}}+2( \lambda +K^+ )\r)  \lf(1+ \log\frac{\|\phi\|_{\infty}}{\phi }\r),
\end{equation*}
where $\lambda_1$ is the first eigenvalue of $\Delta +Z$.

Let us explain the main idea to reprove \eqref{eq:Li_Harnack} by stochastic approach. Inspired by \cite{Li2021}, we adopt  the Bismut-Elworthy-Li formula, 
 which offers probabilistic representations for derivatives of heat semigroups on Riemannian manifolds. 
This formula was first introduced in \cite{bismut1984large} on compact manifold and later refined in \cite{elworthy1994formulae, Tha97} with a martingale approach.   Let $x_t^x$ be a diffusion  starting at $x\in M$, whose generator is $L=\frac{1}{2}(\Delta+Z)$, and $P_t$ be the associated heat semigroup with a probabilistic representation:
\begin{equation}
    P_tf(x)=\mathbb{E}\big[f(x_t^x)\mathbbm{1}_{\{t<\eta(x)\}}\big],
\end{equation}
$\eta(x)$ is the explosion time of $x_t^x$ .  It is well known that the condition $(\mathbb{H}1)$ ensures the non-explosiveness of the process, i.e.
$\eta(x)=\infty$ and then $P_tf$ for $t\geq 0$ is the unique solution to equation \eqref{H-E} with $u(0,\cdot)=f$ (see e.g. \cite{Ikeda-Watanabe:1989, Wbook2}).  
Assume  the condition $(\mathbb{H}1)$ holds. For any  fixed $T>0$, the global Bismut-Elworthy-Li formula states that given any $k\in C_b^1([0,T])$ such that $k(0)=1$ and $k(T)=0$, and for any  $f\in \mathscr{B}_b^+(M)$ and $v\in T_xM$, 
\begin{equation}\label{s0.for.bis}
    \Braket{\nabla P_Tf, v}(x)=-\mathbb{E}\lf[f(x_T^x)\int_0^{T}\Braket{Q_s(\dot{k}(s)v), \ptr_sdB_s}\r],
\end{equation}
where $B_s$ denotes a $d$-dimensional Brownian motion on $\mathbb{R}^d$, the linear operator $Q_s: T_xM\rightarrow T_{x_s(x)}M$  represents  the damped stochastic parallel transport, it solves the following covariant equation:
\begin{equation}\label{s1.eq.dspt}
    DQ_s=-\frac{1}{2}\Ric_Z^\sharp(Q_s)\,ds,\quad  Q_0=\id,
\end{equation}
where $D:=\ptr_s\circ d \circ\ptr_s^{-1}$ and $\ptr_s:T_xM\rightarrow T_{x_s}M$ denotes stochastic parallel transport.  Then applying the Young inequality  (see Lemma \ref{lem.jen} below) to this formula,  we have that for any $\alpha>0,$
\begin{equation}\label{s1.ineq.key}
    \aligned
    \Braket{\nabla P_Tf, v}(x)&=\mathbb{E}\lf[-\alpha f(x_T^x)\frac{1}{\alpha}\int_0^T \Braket{Q_s(\dot{k}_s v), \ptr_s dB_s}\r]\\
    &\leq \inf_{\alpha>0}\lf\{ \alpha P_Tf \log \frac{A}{P_Tf}+\alpha P_Tf \log\mathbb{E}\e^{-\frac{1}{\alpha }\int_0^T\Braket{\dot{k}(s)Q_s(v), \ptr_s dB_s}} \r\}
    \endaligned
\end{equation}
with $A=\Vert f\Vert_\infty=\sup_{t\geq 0, x\in M}P_tf(x)$.
From  the condition $(\mathbb{H}1)$,  it is easy to see that  $|Q_s|\leq \e^{\frac{1}{2}Ks}$ (see  Lemma \ref{s1.lem.es.dspt}), which together with  Lemma \ref{s1.lem.expmart} below,  implies
\begin{equation}\label{s1.stt1.1}
    \aligned
       \log\mathbb{E}\Big[\e^{-\int_0^T\frac{\dot{k}(s)}{\alpha}\Braket{Q_s(v), \ptr_s dB_s}}\Big]&\leq \inf_{p> 1} \lf\{\frac{(p-1)}{p}\log\mathbb{E}\lf[\e^{\frac{p^2}{2(p-1)}\int_0^T\frac{\dot{k}(s)^2}{\alpha^2}|Q_s(v)|^2ds}\r] \r\} \\
       &\leq \frac{1}{2}\int_0^T \frac{\dot{k}(s)^2}{\alpha^2}\e^{Ks}ds.
    \endaligned
\end{equation}
Consider the exponential interpolation test function 
\begin{equation}
 k(s)=\frac{\int_s^{T} \e^{-Kr}\, dr}{\int_0^T\e^{-Kr}\, dr},
\end{equation}
and define the unit vector  $v=\frac{\nabla P_T f}{|\nabla P_T f|}$.  Write $u(t,x)= P_{2t}f(x)$ for $ t\geq 0$ with $f=u(0,\cdot)$, which solves the equation \eqref{H-E}.  We then conclude from the  inequalities \eqref{s1.stt1.1} and \eqref{s1.ineq.key} that
\begin{equation}\label{s1.stt1.3}
\aligned
     \frac{|\nabla u|}{u}&\leq \inf_{\alpha>0}\lf\{\alpha \log\frac{A}{u}+\frac{1}{2\alpha}\frac{1}{\int_0^{2t}\e^{-Kr}dr}\r\}=\sqrt{\frac{2K}{1-\e^{-2Kt}}\log\frac{A}{u}}.
\endaligned
\end{equation}
Along this idea, the development of Hessian matrix estimates naturally requires the prior establishment of second-order Bismut-type formulas.  This foundational approach is well-supported in the literatures \cite{APT2003, elworthy1994formulae, Li2021,
  StT98, St00,  Tha97, Th19}.

The paper is organized as follows. We establish Theorem \ref{add-mainth1} in Section 2 through a Bismut-type Hessian formula. Building on these foundational results, Section \ref{s4} presents two key applications: we first derive a weak Harnack inequality that demonstrates how our Hessian estimates lead to new advances in regularity theory, and then we obtain a precise pointwise Hessian estimate for eigenfunctions, illustrating the power of our main theorem in concrete settings.

\section{Hessian matrix estimates}\label{s2}

Let $(M,g)$  be a $d$-dimensional smooth Riemannian manifold. 
We first introduce some auxiliary results.
It is well-known that 
by a Gronwall argument from the covariant equation \eqref{s1.eq.dspt} for $Q_s$, we can easily get the following estimate.
\begin{lemma}\label{s1.lem.es.dspt}
If  $\Ric_Z \geq h $ for $h \in C(M)$,  then for $t>s\geq 0$,
\begin{equation}\label{s1.es.dspt}
    |Q_t\circ Q_s^{-1}|\leq \exp \lf(-\frac{1}{2}\int_s^t h(x_r)\, dr \r).
\end{equation}
\end{lemma}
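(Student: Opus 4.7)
The plan is a direct pathwise Gronwall argument applied to the covariant equation \eqref{s1.eq.dspt}. Because the right-hand side of that equation carries a $ds$ (and no $dB_s$), the equation is really a random ODE along each Brownian trajectory, so I can work almost-surely path by path with classical ODE techniques; no Itô correction enters.

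First I would fix $s \geq 0$ and introduce the rescaled operator $\tilde Q_t := Q_t \circ Q_s^{-1}$ for $t \geq s$. The map $Q_s$ is invertible as the solution of a linear ODE starting from $\id$, and since $Q_s^{-1}$ is $t$-independent, $\tilde Q_t$ satisfies exactly the same covariant equation as $Q_t$ but with initial condition $\tilde Q_s = \id$. Next, for an arbitrary vector $v \in T_{x_s}M$, I would set $\phi(t) := |\tilde Q_t v|^2 = \langle \tilde Q_t v, \tilde Q_t v\rangle$. Differentiating and using the metric-compatibility of the Levi-Civita connection (pulled back along $\ptr$) together with the ODE for $\tilde Q_t$ yields
\[
\phi'(t) = 2\langle D \tilde Q_t v,\, \tilde Q_t v\rangle = -\Ric_Z(\tilde Q_t v, \tilde Q_t v) \leq -h(x_t)\, \phi(t),
\]
where the last step uses the hypothesis $\Ric_Z \geq h$. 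The integral form of Gronwall's inequality then gives
\[
\phi(t) \leq |v|^2 \exp\Bigl(-\int_s^t h(x_r)\, dr\Bigr),
\]
and taking square roots and supremizing over unit vectors $v \in T_{x_s}M$ produces the asserted operator-norm bound \eqref{s1.es.dspt}.

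I do not anticipate any serious obstacle: the argument is entirely pathwise and classical. The only minor points worth noting are (i) invertibility of $Q_s$, which is immediate from the linearity of \eqref{s1.eq.dspt}, and (ii) the fact that $h$ is only continuous, but since Brownian paths $r \mapsto x_r$ are continuous, so is $r \mapsto h(x_r)$, and both the Riemann integral $\int_s^t h(x_r)\, dr$ and the pathwise Gronwall step apply without incident.
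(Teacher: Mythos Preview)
Your proposal is correct and is essentially identical to the paper's own proof: both fix $s$, observe that $\tilde Q_t := Q_t\circ Q_s^{-1}$ satisfies the same covariant ODE with initial condition $\tilde Q_s=\id$, differentiate $|\tilde Q_t v|^2$ to get $d|\tilde Q_t v|^2 = -\Ric_Z(\tilde Q_t v,\tilde Q_t v)\,dt \le -h(x_t)|\tilde Q_t v|^2\,dt$, and apply Gronwall. The paper phrases the differentiation step as ``It\^o's formula'' but, as you note, there is no martingale part, so this is really just the pathwise chain rule you use.
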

\begin{proof}
We define the inverse operator  $Q_t^{-1}=\ptr_t^{-1} (Q_t\ptr_t^{-1})^{-1}$, where $(Q_t\ptr_t^{-1})^{-1}$ denotes the inverse of the linear operator
$$Q_t\ptr_t^{-1}:\ T_{X_t}M\rightarrow T_{X_t}M.$$
The existence of this inverse is guaranteed by the local boundedness of $\Ric_Z$ and the construction of $Q_t$.
 Fixed $s\geq 0$.    We first  see that $\{Q_{t}\circ Q_s^{-1}\}_{\ t\geq s}$ is the solution to the following 
covariant equation:
\begin{align*}
D \tilde {Q}_t=-\frac{1}{2} {\rm Ric}^\sharp_Z(\tilde {Q}_t)\, dt,\qquad \tilde{Q}_s=\id.
\end{align*}
Moreover, by It\^{ o}'s formula, for any $v\in T_xM$, 
\begin{align*}
d | \tilde {Q}_t(v)|^2=-{\rm Ric}^\sharp_Z(\tilde {Q}_t(v),\, \tilde {Q}_t(v) )\, dt\leq -h(x_t) | \tilde {Q}_t(v)|^2\, dt.
\end{align*}
Taking the  integral on both side from $s$ to $t$ and then  using Gronwall's inequality, we then complete the proof. 
\end{proof}

As explained in the introduction, we introduce the Young inequality. Note that the following Young inequality   is a special case of  the Gibbs inequality about entropy variation, which is essential from the convexity of the entropy, see for example \cite{feng2023quantitative}.
\begin{lemma}\label{lem.jen}
For random variables $X, Y$ with $X>0, \mathbb{E} X\neq 0,$ and $ XY , \e^Y$ are integrable, then
    \begin{equation}\label{ineq.gib}
        \mathbb{E}(XY)\leq \mathbb{E}\Big(X\log\frac{X}{\mathbb{E}X}\Big)+\mathbb{E}X\log\mathbb{E}\e^Y.
    \end{equation}
\end{lemma}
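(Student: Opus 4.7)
The plan is to derive \eqref{ineq.gib} by a change-of-measure argument combined with Jensen's inequality applied to the concave function $\log$. The hypotheses $X>0$ and $\mathbb{E}X\neq 0$ (hence $\mathbb{E}X>0$) are exactly what is needed to make the tilted measure well-defined, and the assumed integrability of $XY$ and $\e^Y$ ensures that all quantities appearing in the argument are finite.

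First I would introduce the tilted probability measure $\tilde{\mathbb{P}}$ on the same probability space, specified by $d\tilde{\mathbb{P}}/d\mathbb{P}=X/\mathbb{E}X$. Under $\tilde{\mathbb{P}}$ one immediately obtains the two bookkeeping identities $\tilde{\mathbb{E}}[Y]=\mathbb{E}[XY]/\mathbb{E}X$ and $\tilde{\mathbb{E}}[\log X]=\mathbb{E}[X\log X]/\mathbb{E}X$. Next I would apply Jensen's inequality for the concave function $\log$ to the positive random variable $W:=\e^Y\,\mathbb{E}X/X$, which gives $\log\tilde{\mathbb{E}}[W]\geq \tilde{\mathbb{E}}[\log W]$. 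The density cancellation $(X/\mathbb{E}X)\cdot(\mathbb{E}X/X)=1$ makes the left-hand side equal to $\log\mathbb{E}\e^Y$, while the expansion $\log W=Y+\log\mathbb{E}X-\log X$ makes the right-hand side equal to $\mathbb{E}[XY]/\mathbb{E}X+\log\mathbb{E}X-\mathbb{E}[X\log X]/\mathbb{E}X$. Multiplying through by $\mathbb{E}X$ and rearranging yields precisely \eqref{ineq.gib}.

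There is no substantive obstacle: the whole argument collapses to a few lines once the correct tilt is identified. The only technical point worth flagging is the degenerate case $\mathbb{E}[X\log X]=+\infty$, where the right-hand side of \eqref{ineq.gib} is $+\infty$ and the inequality is trivial, so one may freely assume $\mathbb{E}[X\log X]<\infty$ when invoking Jensen. As a cross-check, the same bound can be obtained in a single step from the Young-type inequality $ab\leq b\log b-b+\e^a$ applied with $a=Y-\log\mathbb{E}\e^Y$ and $b=X/\mathbb{E}X$ followed by integration against $\mathbb{P}$ (using $\mathbb{E}\e^{a}=1$), which makes the underlying duality between relative entropy and the log-Laplace transform fully transparent.
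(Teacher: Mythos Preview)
Your proof is correct and is precisely the entropy-convexity argument the paper alludes to: the paper does not spell out a proof but only remarks that the inequality is the Gibbs variational inequality coming from convexity of relative entropy, citing \cite{feng2023quantitative}. Your change-of-measure plus Jensen's inequality for $\log$ is exactly the standard derivation of that duality, so there is nothing to add or compare.
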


Using  the property of the exponential martingale and   Cauchy's  inequality,  it is easy to obtain the following inequality,  which will help to prove Theorem \ref{add-mainth1}.
\begin{lemma}\label{s1.lem.expmart}
   If $M_t$ is a  local martingale, then one has
    \begin{equation}\label{ineq-martingale}
        \mathbb{E}\e^{M_t}\leq \inf_{p> 1} \lf(\mathbb{E}\e^{\frac{p^2}{2(p-1)}[M]_t}\r)^{\frac{p-1}{p}}.
    \end{equation}
\end{lemma}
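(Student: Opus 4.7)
The plan is to apply Hölder's inequality after splitting the exponential into an exponential-martingale factor and a factor involving only $[M]_t$. For any $p > 1$, let $q$ be the conjugate exponent so that $\tfrac{1}{p} + \tfrac{1}{q} = 1$; note that $q = p/(p-1)$. I would write
\begin{equation*}
\e^{M_t} \;=\; \e^{M_t - \tfrac{p}{2}[M]_t}\cdot \e^{\tfrac{p}{2}[M]_t},
\end{equation*}
and then apply Hölder's inequality with exponents $p$ and $q$:
\begin{equation*}
\mathbb{E}\e^{M_t}\;\leq\; \left(\mathbb{E}\e^{p M_t - \tfrac{p^2}{2}[M]_t}\right)^{1/p}\left(\mathbb{E}\e^{\tfrac{pq}{2}[M]_t}\right)^{1/q}.
\end{equation*}

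Next I would observe that $\mathcal{E}_t := \e^{pM_t - \tfrac{p^2}{2}[M]_t}$ is a nonnegative local martingale (the stochastic exponential of $pM_t$) and hence a supermartingale, so $\mathbb{E}\mathcal{E}_t \leq \mathcal{E}_0 = 1$. This kills the first factor. A standard localization with a sequence of stopping times reducing $M$ makes this step rigorous; this is the one subtle point, since it is what allows us to avoid any integrability hypothesis like Novikov's condition on $M$.

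Plugging $q = p/(p-1)$ into the second factor gives $\tfrac{pq}{2} = \tfrac{p^2}{2(p-1)}$ and $\tfrac{1}{q} = \tfrac{p-1}{p}$, so
\begin{equation*}
\mathbb{E}\e^{M_t}\;\leq\;\left(\mathbb{E}\e^{\tfrac{p^2}{2(p-1)}[M]_t}\right)^{(p-1)/p}.
\end{equation*}
Taking the infimum over $p > 1$ yields the claimed bound \eqref{ineq-martingale}.

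The main obstacle is the supermartingale step: since $M$ is only assumed a local martingale, one cannot directly assert $\mathbb{E}\mathcal{E}_t \leq 1$ without the localization argument, and one must also be careful that the Hölder bound passes to the limit under Fatou's lemma when removing the localizing stopping times. Everything else is a direct exponent calculation.
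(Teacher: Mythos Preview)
Your proof is correct and matches the approach the paper sketches: the paper only remarks that the inequality follows from ``the property of the exponential martingale and Cauchy's inequality,'' which is exactly your H\"older splitting with the supermartingale bound $\mathbb{E}\,\mathcal{E}(pM)_t\leq 1$. Your attention to the localization/Fatou step is appropriate and fills in the detail the paper omits.
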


\subsection{Proof of Theorem \ref{add-mainth1}} \label{s3}

In this subsection,  we  prove the matrix  Hessian estimates using methodology parallel to the gradient arguments presented in the introduction. This approach requires  second-order derivative representations through 
the Bismut-Stroock Hessian  formula developed in \cite{chen2023bismut}.

To describe it, for $k\in C_b^1(\R^+)$, we define a  double damped stochastic parallel transport :
$W_t^k(\cdot, \cdot):T_xM\times T_xM \rightarrow T_{X_t}M$ by
\begin{align}\label{for.ddspt}
W_t^k(v,w)&:=Q_t\int_{0}^{t}Q_s^{-1}{\rm Rm}(/\!/_sdB_s,Q_s(k(s)v))Q_s(w) \notag\\
 &\quad  -\frac{1}{2}Q_t\int_{0}^{t}Q_s^{-1}\lf(\nabla {\rm Ric}_Z^{\sharp}+{\bf d}^*R-R(Z)\r)(Q_s(k(s)v),Q_s(w))ds, 
\end{align} 
for any $v,w \in T_xM$. 
Note that for $w,u,v \in T_xM$, 
\begin{align*}
\langle {\bf d}^* R(u,v), w  \rangle= \langle (\nabla _{w}\Ric^\sharp)(u),v \rangle -\langle (\nabla _{v}\Ric^\sharp)(w),  u \rangle.
\end{align*}
We introduce the following global Bismut-Stroock Hessian formula from the local version from \cite{chen2023bismut}.
\begin{theorem}\label{thm.glob.hess}
Under the condition $(\mathbf{H})$,  for $k\in C_b^1(\mathbb{R}^+)$ such that $k(0)=1, k(s)=0$ for $s\geq T, $ and for  all $f\in \mathscr{B}_b^+(M)$ and  $v, w\in T_xM$, 
\begin{equation}\label{for.glob.hess}
    \aligned
(\Hess P_Tf)(v, w)=&-\frac{1}{2}\mathbb{E}\lf[f(x^x_T)\int_0^T\Braket{W_s^k(v, \dot{k}(s)w), \ptr_sdB_s}\r]-\frac{1}{2}\mathbb{E}\lf[f(x^x_T)\int_0^T\Braket{W_s^k(w, \dot{k}(s)v), \ptr_sdB_s}\r]\\\
&+\mathbb{E}\lf[f(x^x_T) \int_0^T\int_0^s\Braket{Q_r(\dot{k}(r)v), \ptr_rdB_r}\Braket{Q_s(\dot{k}(s)w), \ptr_sdB_s}\r]\\
&+\mathbb{E}\lf[f(x^x_T)\int_0^T\int_0^s\Braket{Q_r(\dot{k}(r)w), \ptr_rdB_r}\Braket{Q_s(\dot{k}(s)v), \ptr_sdB_s}\r]. 
    \endaligned
\end{equation}
\end{theorem}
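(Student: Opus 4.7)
The plan is to derive \eqref{for.glob.hess} as the $n\to\infty$ limit of the local Bismut-Stroock Hessian formula established in \cite{chen2023bismut}, with condition $(\mathbf{H})$ supplying exactly the curvature and integrability estimates required to justify the passage to the limit.

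First I fix a relatively compact exhaustion $\{D_n\}$ of $M$ by open sets with smooth boundary, together with the exit times $\tau_n := \inf\{t \ge 0 : x_t^x \notin D_n\}$. Under hypothesis (H1), Lemma \ref{s1.lem.es.dspt} (combined with a standard comparison argument on a radial function) guarantees non-explosion of the diffusion with generator $L = \tfrac12(\Delta+Z)$, so $\tau_n \uparrow \infty$ almost surely. The local formula of \cite{chen2023bismut} applied on $D_n$ then yields the stopped version of \eqref{for.glob.hess}, i.e.\ the same identity with the upper integration limit $T$ replaced by $T\wedge\tau_n$ throughout, and with $f(x_T^x)$ replaced by its stopped counterpart on the complementary event. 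The task is then to send $n\to\infty$ termwise.

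Second, the key uniform-in-$n$ bounds follow from $(\mathbf{H})$. Lemma \ref{s1.lem.es.dspt} applied with $h = -K$ gives $|Q_s| \le \e^{Ks/2}$ and, from the analogous covariant equation for $Q_s^{-1}$, $|Q_s^{-1}| \le \e^{Ks/2}$. Substituting into \eqref{for.ddspt} and invoking (H2) for the $\mathrm{Rm}$-integrand and (H3) for the drift integrand, one obtains an $L^2$-bound of the form
\begin{equation*}
\mathbb{E}\,|W_s^k(v,w)|^2 \le C(K,K_1,K_2,\|k\|_\infty)\, \e^{3Ks}\,|v|^2|w|^2,
\end{equation*}
with $C$ explicit and locally bounded in $s$. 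Combined with $\|k\|_{C^1([0,T])}<\infty$, boundedness of $f$, and the It\^o isometry (applied twice for the iterated stochastic integrals), this furnishes uniform-in-$n$ $L^1(\P)$ control of each of the four random variables appearing on the right-hand side of the stopped identity.

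Third, on the event $\{\tau_n > T\}$ the stopped stochastic integrals agree pathwise with their unstopped counterparts, while $\P(\tau_n \le T) \to 0$ by non-explosion; the uniform $L^1$ estimates from the previous step then force the complementary contributions to vanish as $n \to \infty$. Dominated convergence identifies the limit of the right-hand side with that in \eqref{for.glob.hess}, and since the left-hand side is $n$-independent, this completes the proof. The step I expect to be the main obstacle is securing the uniform $L^2$ control of the double stochastic-integral terms $\int_0^{T\wedge\tau_n}\!\int_0^s \langle Q_r(\dot k(r)v), /\!/_r dB_r\rangle\,\langle Q_s(\dot k(s)w), /\!/_s dB_s\rangle$: this requires a careful two-level It\^o-isometry argument together with the exponential bound $|Q_s|^2 \le \e^{Ks}$ and the vanishing of $k$ at $s=T$ to keep the constants finite and independent of $n$.
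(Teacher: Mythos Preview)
Your localization-by-stopping-times strategy is natural, but it skips over exactly the step that carries the weight. What \cite{chen2023bismut,CTW24} supply is not a stopped version of \eqref{for.glob.hess}; it is the statement that the process displayed in \eqref{eq.thm.3.1} is a \emph{local} martingale. If you stop that process at $\tau_n$ and take expectations, you do not get ``the same identity with $T$ replaced by $T\wedge\tau_n$'': on the event $\{\tau_n\le T\}$ you pick up boundary terms of the form
\[
(\Hess P_{T-\tau_n}f)\bigl(Q_{\tau_n}(k(\tau_n)v),\,Q_{\tau_n}(k(\tau_n)w)\bigr),\qquad
\bigl\langle \nabla P_{T-\tau_n}f,\ W^k_{\tau_n}(v,k(\tau_n)w)\bigr\rangle,
\]
and the two mixed gradient/stochastic-integral terms. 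Your $L^2$ bounds on $W^k_s$ and $Q_s$ control the four stochastic-integral terms, but they say nothing about $\Hess P_{T-t}f$ or $\nabla P_{T-t}f$; since $f$ is merely bounded measurable these can blow up as $T-t\downarrow 0$, and you have no mechanism to force the corresponding contributions on $\{\tau_n\le T\}$ to vanish.

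The paper closes this gap differently: it first invokes \cite[Corollary~4.3]{CTW24} and \cite[Theorem~6.1]{ThW98} to obtain \emph{a priori} uniform bounds $\sup_{[\epsilon,T]\times M}\bigl(|\Hess P_\cdot f|+|\nabla P_\cdot f|\bigr)<\infty$ for every $\epsilon>0$. Those bounds, together with the $L^2$ estimates on $W^k$ and $Q$, upgrade the local martingale \eqref{eq.thm.3.1} to a true martingale on $[0,T-\epsilon]$. One then takes $k$ with $k\equiv 0$ on $[T-\epsilon,T]$, so that evaluating the martingale at $t=T-\epsilon$ kills all the Hessian and gradient terms outright, and finally lets $\epsilon\downarrow 0$. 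Your argument can be repaired along the same lines, but not without importing precisely those a priori derivative bounds on $P_tf$ that you have omitted.
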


\begin{proof}
It has been proved in \cite{chen2023bismut, CTW24} that
\begin{equation}\label{eq.thm.3.1}
    \aligned
&(\Hess P_{T-t}f)(Q_t(k(t)v),\, Q_t(k(t)w))+\langle \nabla P_{T-t}f, W^k_t(v,k(t)w) \rangle \\ 
&-\langle \nabla P_{T-t}f , Q_t(k(t)v)\rangle\ \int_0^t \langle Q_s(\dot{k}(s)w), \ptr_s dB_s \rangle-\langle \nabla P_{T-t}f , Q_t(k(t)w)\rangle\ \int_0^t \langle Q_s(\dot{k}(s)v), \ptr_s dB_s \rangle\\
&-\frac{1}{2}P_{T-t}f(x^x_t) \int_0^t\Braket{W^{k}_s(v,\dot{k}(s)w), \ptr_sdB_s}-\frac{1}{2}P_{T-t}f(x^x_t) \int_0^t\Braket{W^{k}_s(w,\dot{k}(s)v), \ptr_sdB_s}\\
&+P_{T-t}f (x^x_t) \lf(\int_0^t\int_0^s\Braket{Q_r(\dot{k}(r)v), \ptr_rdB_r}\Braket{Q_s(\dot{k}(s)w), \ptr_sdB_s}\r)\\
&+P_{T-t}f (x^x_t) \lf(\int_0^t\int_0^s\Braket{Q_r(\dot{k}(r)w), \ptr_rdB_r}\Braket{Q_s(\dot{k}(s)v), \ptr_sdB_s}\r)
    \endaligned
\end{equation}
is a local martingale.  
According to the definitions of $W^k$ and $Q$,  it  was shown in \cite{CTW24} that $\E^x|W^k_s(v,w)|^2<\infty$ and $|Q_s|<\infty$ under the condition $(\mathbf{H})$.  By \cite[Corollary 4.3]{CTW24} and \cite[Theorem 6.1 (6.5)]{ThW98}  letting  $R$ tend to $\infty$,  we find that the functions
$|\Hess P_{\cdot}f|$ and $|\nabla P_{\cdot}f|$ are bounded on
$[\epsilon, T]\times M$ for any $\epsilon>0$. These imply that  the  local martingale in  \eqref{eq.thm.3.1}
is indeed a true martingale on $[0,T-\epsilon]$.
To derive the global result, we choose $k\in C_b^1([0,T])$ such that $k(0)=1$ and $k(t)=0$ for $t\geq T-\epsilon$. By letting  $\epsilon$ tend to $0$, we obtain the second-order Bismut type formula in \eqref{for.glob.hess} for \( P_T f \) .

\end{proof}

Within these lemmas above and the Bismut-Stroock Hessian formula, we aim to prove Theorem \ref{add-mainth1}.
From the  formula \eqref{for.glob.hess}, it suffices for us to  give estimates about the  following two terms:
\begin{equation}\label{term1}
-\mathbb{E}\lf[f(x_T^x)\int_0^{T}\Braket{W_s^k(v, \dot{k}(s)w), \ptr_s \, dB_s}\r], 
\end{equation}
and
\begin{equation}\label{term2}
\mathbb{E}\lf[f(x_T^x)\int_0^T\int_0^s\Braket{Q_r(\dot{k}(r)v), \ptr_rdB_r}\Braket{Q_s(\dot{k}(s)w), \ptr_sdB_s}\r].
\end{equation}
To simplify the discussion,  we introduce the following notations :
\begin{equation}
    H=\int_0^T\e^{Ks}\dot{k}(s)^2\,ds, \quad G=\int_0^T \e^{Ks}k(s)^2\, ds, 
\end{equation}
\begin{lemma}\label{lem.s3.2}
Under the condition $(\mathbf{H})$,  for  any $f\in \mathscr{B}^+_b(M)$ and any $T>0$,
\begin{equation*}
    -\mathbb{E}\lf[ f(x_T^x)\int_0^{T}\Braket{W_s^k(v, \dot{k}(s)w), \ptr_s \, dB_s}\r]\leq \sqrt{HG} u \lf[ \lf( \frac{1}{2} K_2\sqrt{T}+ 2K_1 \r) \sqrt{2\log \frac{A}{P_Tf}} +4K_1 \log \frac{A}{P_Tf} \r],
\end{equation*}
where $A:=\|f\|_{\infty}$,   $k\in C^1_b(\R^+)$ and $v, w\in T_xM$. 
\end{lemma}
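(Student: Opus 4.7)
The strategy is to split the integrand $W_s^k(v,\dot k(s)w)$ into its martingale (curvature) part and its drift part and handle each term via a combination of Young's inequality (Lemma \ref{lem.jen}) and the exponential martingale estimate (Lemma \ref{s1.lem.expmart}). Concretely, I would write
\[
W_s^k(v,\dot k(s)w)=\Phi_s+\Psi_s,
\]
where
\[
\Phi_s:=\dot k(s)\,Q_s\!\!\int_0^s\! Q_r^{-1}\mathrm{Rm}(\ptr_r dB_r, Q_r k(r)v)Q_r w,\qquad \Psi_s:=-\tfrac{\dot k(s)}{2}Q_s\!\!\int_0^s\! Q_r^{-1}(\nabla\Ric_Z^\sharp+\mathbf{d}^*R-R(Z))(Q_r k(r)v,Q_r w)\,dr,
\]
and correspondingly split the target expectation as $I_\Phi+I_\Psi$.

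\paragraph{Drift part $I_\Psi$.} Using Lemma \ref{s1.lem.es.dspt} (which gives $|Q_sQ_r^{-1}|\le \e^{K(s-r)/2}$ under (H1)) together with (H3) and Cauchy--Schwarz in time, I would produce a pointwise deterministic bound of the shape $|\Psi_s|\le \tfrac12|\dot k(s)|K_2|v||w|\e^{Ks/2}\sqrt{TG}$, whence the stochastic integral $-\int_0^T\langle\Psi_s,\ptr_s dB_s\rangle$ is a martingale with \emph{deterministically} bounded quadratic variation $\le \tfrac14 K_2^2|v|^2|w|^2 TGH$. Applying Lemma \ref{lem.jen} to $\mathbb E[f(x_T^x)\cdot(-\int\langle\Psi,\ptr dB\rangle)]$ with a free parameter $\alpha$, using $f\le A$ for the entropy term and Lemma \ref{s1.lem.expmart} (trivially, since $[\cdot]_T$ is bounded) for the exponential term, and then optimising $\alpha$ by AM--GM, yields exactly the $\tfrac12 K_2\sqrt T$ contribution to the final bound.

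\paragraph{Curvature part $I_\Phi$.} This is the main difficulty, because $\Phi_s=\dot k(s)Q_sJ_s$ with $J_s$ itself an It\^o integral, so $[Y_\Phi]_T$ is genuinely random. My plan is to set $N_s:=\ptr_s^{-1}Q_sJ_s$ and compute $d|N_s|^2$ via It\^o's formula; using (H1) to bound the Ricci drift $-\langle N_s,\mathcal R_s N_s\rangle\le K|N_s|^2$ and (H2) to bound the quadratic-variation contribution $\sum_a|\ldots|^2\le K_1^2 k(s)^2\e^{2Ks}|v|^2|w|^2$, one arrives at
\[
d|N_s|^2\le K|N_s|^2\,ds+dm_s+K_1^2 k(s)^2 \e^{2Ks}|v|^2|w|^2\,ds,
\]
with $m_s$ a local martingale. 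A stochastic Gronwall argument then gives the pathwise decomposition $|N_s|^2\le K_1^2|v|^2|w|^2\e^{Ks}G+\e^{Ks}\int_0^s\e^{-Kr}dm_r$, which after integration against $\dot k(s)^2$ and a Fubini rearrangement produces
\[
[Y_\Phi]_T\le K_1^2|v|^2|w|^2 HG+\mathfrak M_T
\]
for a local martingale $\mathfrak M_T$. Plugging this into Young's inequality applied to $I_\Phi$ and then Lemma \ref{s1.lem.expmart} separates the two contributions: the deterministic piece $K_1^2HG$ gives, after AM--GM optimisation in $\alpha$, the $2K_1\sqrt{2\log(A/P_Tf)}$ term, while the control of $\mathbb E[\e^{c\mathfrak M_T}]$ (whose quadratic variation again involves $|N_s|^2$) produces, via a second application of Lemma \ref{s1.lem.expmart}, the $4K_1\log(A/P_Tf)$ term.

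\paragraph{Main obstacle.} The chief technical point is the self-referential nature of the $\mathfrak M_T$-bound: $[\mathfrak M]_T$ is controlled by $|N_s|^2$, which is itself what the Gronwall step is controlling. Resolving this cleanly, and bookkeeping the constants so that exactly the coefficients $\tfrac12 K_2\sqrt T+2K_1$ (on $\sqrt{2\log(A/u)}$) and $4K_1$ (on $\log(A/u)$) appear, is the most delicate calculation. With that loop closed, summing $I_\Phi+I_\Psi$ gives the asserted estimate.
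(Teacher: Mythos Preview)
Your plan is correct and follows essentially the same architecture as the paper: split $W^k$ into its drift and curvature parts, treat each by Young's inequality (Lemma~\ref{lem.jen}) combined with Lemma~\ref{s1.lem.expmart}, with the drift part producing the $\tfrac12 K_2\sqrt{T}$ contribution exactly as you describe.

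For the curvature part the paper is organised slightly more directly than your Gronwall/$\mathfrak M_T$ scheme: it sets $Y_t:=\e^{-Kt/2}Q_tJ_t$ (so $|Y_t|^2=\e^{-Kt}|N_t|^2$, which absorbs the $K|N|^2$ drift into the damping factor) and bootstraps on the single quantity $F:=\sup_{t\in[0,T]}\E\,\e^{\frac{2H}{\alpha^2}|Y_t|^2}$, reaching the self-referential inequality $F\le \e^{\frac{2HK_1^2G}{\alpha^2}}\sqrt{F}$ after one It\^o computation, one application of Lemma~\ref{s1.lem.expmart} with $p=2$, and a Jensen step. Your ``main obstacle'' is then resolved not by iterating further but simply by imposing the constraint $\alpha\ge 4K_1\sqrt{HG}$, under which the inequality closes to $F\le\e^{\frac{4HK_1^2G}{\alpha^2}}$. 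The two $K_1$ coefficients in the final bound therefore do not split as ``deterministic piece $\to 2K_1\sqrt{2\log}$'' and ``martingale piece $\to 4K_1\log$'' the way you suggest: rather, the $4K_1\log(A/P_Tf)$ term comes from the constraint itself (via the shift $\alpha=\alpha'+4K_1\sqrt{HG}$, $\alpha'>0$), and the $2K_1\sqrt{2\log(A/P_Tf)}$ term from AM--GM on the residual $\alpha'\log(A/P_Tf)+\frac{2HGK_1^2}{\alpha'}$.
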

\begin{proof}
According to the definition of $W^k$,  we know that
\begin{align*}
&-\mathbb{E}\lf[ f(x_T^x)\int_0^{T}\Braket{W_s^k(v, \dot{k}(s)w), \ptr_s \, dB_s}\r]\\
&=-\mathbb{E}\lf[ f(x_T^x) \int_0^T \dot{k}(t) \Braket {Q_t\int_{0}^{t}Q_s^{-1}{\rm Rm}(/\!/_sdB_s,Q_s(k(s)v))Q_s(w), \ptr _t \, d B_t }\r]\\
&\quad +\frac{1}{2}\mathbb{E} \lf[f(x_T^x) \int_0^T \dot{k}(t) \Braket{Q_t\int_{0}^{t}Q_s^{-1}\lf(\nabla {\rm Ric}_Z^{\sharp}+{\bf d}^*R-R(Z)\r)(Q_s(k(s)v),Q_s(w))\,ds, \ptr_t\, d B_t}\r]\\
&:={\rm I}+{\rm II}.
\end{align*}
We firstly deal with the easy part {\rm II}.  Using Young's  inequality \eqref{ineq.gib}, we know that for $\alpha>0$, 
\begin{align*}
&\mathbb{E} \lf[f(x_T^x) \int_0^T \dot{k}(t) \Braket{Q_t\int_{0}^{t}Q_s^{-1}\lf(\nabla {\rm Ric}_Z^{\sharp}+{\bf d}^*R-R(Z)\r)(Q_s(k(s)v),Q_s(w))\,ds, \ptr_t\, d B_t}\r]\\
&\leq  \mathbb{E} \lf[ \alpha f (x^x_T) \log \frac{f(x_T^x)}{\mathbb{E}f(x_T^x)} \r] \\
&\quad +\mathbb{E}[\alpha f(x_T^x)]\log\mathbb{E}\e^{-\frac{1}{\alpha} \int_0^T \dot{k}(t) \Braket{Q_t\int_{0}^{t}Q_s^{-1}\big(\nabla {\rm Ric}_Z^{\sharp}+{\bf d}^*R-R(Z)\big)(Q_s(k(s)v),Q_s(w))\,ds, \ptr_t\, d B_t}}.
\end{align*}
Since $ \Ric_Z \geq -K$, by Lemma \ref{s1.lem.es.dspt}, it is easy to know that $|Q_t\circ Q_s^{-1}|\leq \e^{\frac{1}{2}K(t-s)}$ for $t\geq s$,  which, together with the condition ($\mathbb{H}$3) and the inequality \eqref{ineq-martingale} for $p$ tending to $1$,  further imply
\begin{align*}
&\mathbb{E}\e^{-\frac{1}{\alpha} \int_0^T \dot{k}(t) \Braket{Q_t\int_{0}^{t}Q_s^{-1}\big(\nabla {\rm Ric}_Z^{\sharp}+{\bf d}^*R-R(Z)\big)(Q_s(k(s)v),Q_s(w))\,ds, \ptr_s\, d B_t}}\\
&\leq \exp\lf\{\frac{K_2^2}{2\alpha^2}  \int_0^T\dot{k}(t)^2\e^{Kt} \lf(\int_0^t \e^{\frac{1}{2}Ks}k(s) \, ds\r)^2\, dt \r\}.
\end{align*}
Therefore, we conclude that
\begin{align*}
&\mathbb{E} \lf[f(x_T^x) \int_0^T \dot{k}(t) \Braket{Q_t\int_{0}^{t}Q_s^{-1}\lf(\nabla {\rm Ric}_Z^{\sharp}+{\bf d}^*R-R(Z)\r)(Q_s(k(s)v),Q_s(w))\,ds, \ptr_s\, d B_t}\r]\\
&\leq P_Tf \inf_{\alpha>0}\lf\{ \alpha  \log \frac{A}{P_Tf}+\frac{K_2^2}{2\alpha}  \int_0^T\dot{k}(t)^2\e^{Kt} \lf(\int_0^t \e^{\frac{1}{2}Ks}k(s) \, ds\r)^2\, dt \r\} \\
&=  K_2 P_Tf \sqrt{\log \frac{A}{P_Tf}} \sqrt{ 2\int_0^T\dot{k}(t)^2\e^{Kt} \lf(\int_0^t \e^{\frac{1}{2}Ks} k(s)\, ds\r)^2\, dt}\\
&\leq   K_2 \sqrt{2H} \lf(\int_0^T \e^{\frac{1}{2}Ks} k(s)\, ds \r) P_Tf \sqrt{\log \frac{A}{P_Tf}}\\
&\leq  K_2 \sqrt{H G T}P_Tf \sqrt{2\log \frac{A}{P_Tf}}.
\end{align*}
 We now deal with the term {\rm I}. 
Using Young's inequality \eqref{ineq.gib} again, we have that for any $\alpha>0, $
\begin{align}\label{ineq.lem.main.s3}
&-\mathbb{E}\lf[ f(x_T^x) \int_0^T \dot{k}(t)\Braket {Q_t\int_{0}^{t}Q_s^{-1}{\rm Rm}(/\!/_sdB_s,Q_s(k(s)v))Q_s(w), \ptr _t \, d B_t }\r] \notag\\
& \leq \mathbb{E} \lf[ \alpha f (x_T^x) \log \frac{f(x_T^x)}{\mathbb{E}f(x_T^x)} \r]  +\mathbb{E}[\alpha f(x_T^x)]\log\mathbb{E}\e^{-\frac{1}{\alpha}  \int_0^T \dot{k}(t)\Braket {Q_t\int_{0}^{t}Q_s^{-1}{\rm Rm}(/\!/_sdB_s,Q_s(k(s)v))Q_s(w), \ptr _t \, d B_t }}.\
\end{align}
Fixed $t\in [0,T]$,  by the inequality \eqref{ineq-martingale} for $p=2$, 
 we have the estimate 
 \begin{align}\label{esti-I}
&\mathbb{E}\e^{-\frac{1}{\alpha}  \int_0^T \dot{k}(t)\Braket {Q_t\int_{0}^{t}Q_s^{-1}{\rm Rm}(/\!/_sdB_s,Q_s(k(s)v))Q_s(w), \ptr _t \, d B_t }} \notag\\
&\qquad \leq \sqrt{\E \e ^{\frac{2}{\alpha^2} \int_0^T\dot{k}(t)^2 \lf|Q_t\int_{0}^{t}Q_s^{-1}{\rm Rm}(/\!/_sdB_s,Q_s(k(s)v))Q_s(w)\r|^2\, dt  }}\notag\\
&\qquad \leq \sqrt{\sup_{t\in [0,T]} \E \e^{\frac{2H}{\alpha^2}  \lf| \e^{-Kt/2}Q_t\int_{0}^{t}Q_s^{-1}{\rm Rm}(/\!/_sdB_s,Q_s(k(s)v))Q_s(w)\r|^2 }}.
\end{align}
Let  $Y_t=  \e^{-Kt/2}Q_t\int_{0}^{t}Q_s^{-1}{\rm Rm}(/\!/_sdB_s,Q_s(k(s)v))Q_s(w)$.  Then
using  It\^{o}'s  formula and the conditions ($\mathbb{H}1$) and ($\mathbb{H}2$),  we  have
\begin{align}
 d\lf| Y_t\r|^2 
&\leq 2 \e^{-Kt/2} \lf\langle Y_t ,  \, {\rm Rm}(/\!/_tdB_t,Q_t(k(t)v))Q_t(w) \r\rangle  +\e^{-Kt} |R^{\sharp, \sharp}(Q_t(k(t)v), Q_t(w)) |^2\, dt   \notag \\
&\leq 2 \e^{-Kt/2} \lf\langle Y_t,\,  {\rm Rm}(/\!/_tdB_t,Q_t(k(t)v))Q_t(w) \r\rangle  +\e^{Kt}K_1^2k(t)^2\, dt,
\end{align}
which  turns out that
\begin{equation}
\aligned
\sup_{t\in [0,T]}\E\e^{\frac{2H}{\alpha^2} \lf| Y_t \r|^2}
&\leq  \sup_{t\in [0,T]}\E\e^{\frac{4H}{\alpha^2}  \int _0^t\e^{-Ks/2} \lf\langle Y_s, \,  {\rm Rm}(/\!/_sdB_s,\, Q_s(k(s)v))Q_s(w) \r\rangle +\frac{2H}{\alpha^2}  K_1^2 \int_0^t \e^{Ks}k(s)^2\, ds } \\
&\leq \e^{\frac{2H}{\alpha^2}   K_1^2 G}\sqrt{\E\e^{\frac{32H^2}{\alpha^4}  K_1^2\int_0^T | Y_s|^2\e^{Ks}k(s)^2\,ds  }} \\
&\leq \e^{\frac{2H}{\alpha^2}  K_1^2 G}\sqrt{\sup_{t\in [0,T]} \E\e^{\frac{32H^2 G}{\alpha^4}  K_1^2   | Y_t|^2 }}.
\endaligned
\end{equation}
Let  $\alpha$ be chosen such that 
$$\frac{32H^2  G}{\alpha^4}  K_1^2 \leq \frac{2H}{\alpha^2} ,$$
i.e.,
$
\alpha\geq 4 K_1\sqrt{H G}.
$
Then 
\begin{equation}\label{ineq.s3.main.2}
 \sup_{t\in [0,T]}\E\e^{\frac{2H}{\alpha^2} \lf| Y_t \r|^2}=   \sup_{t\in [0,T]}\E\e^{\frac{2H}{\alpha^2} \lf|\e^{-Kt/2}Q_t\int_{0}^{t}Q_s^{-1}R(/\!/_sdB_s,Q_s(k(s)v))Q_s(w)\r|^2}
\leq \e^{\frac{4H}{\alpha^2}K_1^2 G }.
\end{equation}
By this with \eqref{esti-I} and \eqref{ineq.lem.main.s3},  we  conclude that
\begin{align*}
&-\mathbb{E}\lf[ f(x_T^x) \int_0^T \dot{k}(t)\Braket {Q_t\int_{0}^{t}Q_s^{-1}{\rm Rm}(/\!/_sdB_s,Q_s(k(s)v))Q_s(w), \ptr _t \, d B_t }\r] \\
&  \leq \inf_{\alpha \geq 4 K_1\sqrt{HG}} \lf\{ \mathbb{E} \lf[ \alpha f (x_T^x) \log \frac{f(x_T^x)}{\mathbb{E}f(x_T^x)} \r]  +\frac{2HGK_1^2}{\alpha}\mathbb{E}[\ f(x_T^x)]\r\}\\
&\leq \inf_{\alpha \geq 4 K_1\sqrt{HG}} \lf\{ (\alpha-4 K_1\sqrt{HG}) \mathbb{E} \lf[ f (x^x_T) \log \frac{f(x_T^x)}{\mathbb{E}f(x_T^x)} \r]  +\frac{2HGK_1^2}{\alpha-4 K_1\sqrt{HG}}\mathbb{E}[\ f(x_T^x)]\r\}\\
&\qquad \quad + 4 K_1\sqrt{HG} \E\lf[f (x_T^x) \log \frac{f(x_T^x)}{\mathbb{E}f(x_T^x)}\r]\\
&\leq  2K_1 \sqrt{HG} P_Tf \lf(2 \log \frac{A}{P_Tf} +\sqrt{2 \log \frac{A}{P_Tf}}\r) .
\end{align*}
Combining  this and the estimate result of {\rm II},  we conclude  that
\begin{equation}\label{ineq.s3.key}
\aligned
&-\mathbb{E}\lf[ f(x_T^x)\int_0^{T}\Braket{W_s^k(v, \dot{k}(s)w), \ptr_s \, dB_s}\r]\\
&\leq  \lf ( \frac{1}{2} K_2 \sqrt{2H GT}+2K_1 \sqrt{2HG}\r)P_Tf \sqrt{\log \frac{A}{P_Tf}}+ 4K_1 \sqrt{HG} P_Tf \log \frac{A}{P_Tf} \\
&=\sqrt{HG} P_Tf \lf[ \lf( \frac{1}{2} K_2\sqrt{T}+ 2K_1 \r) \sqrt{2\log \frac{A}{P_Tf}} +4K_1 \log \frac{A}{P_Tf} \r].
\endaligned
\end{equation}
\end{proof}

We 
now  study the term \eqref{term2} and present the result as follows.
\begin{lemma}\label{lem.s3.3}
Under the condition $(\mathbb{H}1)$,  for any $f\in \mathscr{B}^+_b(M)$ and any $T>0$,
\begin{equation*}
\mathbb{E}\lf[f(x_T^x)\int_0^T\int_0^s\Braket{Q_r(\dot{k}(r)v), \ptr_rdB_r}\Braket{Q_s(\dot{k}(s)w), \ptr_sdB_s}\r]\leq 2u H\lf(\sqrt{2\log\frac{A}{P_Tf}}+2\log\frac{A}{P_Tf}\r),
\end{equation*}
where $A=\|f\|_{\infty}$,  $k\in C_b^1(\R^+)$ and $v, w\in T_xM$. 
\end{lemma}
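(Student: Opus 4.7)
The plan is to apply Young's inequality (Lemma \ref{lem.jen}) to strip off the factor $f(x_T^x)$, and then iterate the Cauchy--Schwarz form of Lemma \ref{s1.lem.expmart} (with $p=2$) twice to produce a self-consistent exponential estimate, closely paralleling the treatment of term {\rm I} in Lemma \ref{lem.s3.2}.

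First, write the iterated integral as $M_T=\int_0^T X_s\,d\tilde B_s$ with
\[
X_s:=\int_0^s \dot k(r)\Braket{Q_r v,\ptr_r dB_r},\qquad d\tilde B_s:=\dot k(s)\Braket{Q_s w,\ptr_s dB_s},
\]
and assume $|v|=|w|=1$. From Lemma \ref{s1.lem.es.dspt} under $(\mathbf{H}1)$ one has $|Q_s|\le \e^{Ks/2}$, so $[X]_s\le H$ and $[M]_T\le V_T:=\int_0^T X_s^2\dot k(s)^2\e^{Ks}\,ds$. Lemma \ref{lem.jen} combined with $f\le A$ then gives, for any $\alpha>0$,
\[
\mathbb{E}[f(x_T^x)M_T]\le\alpha P_Tf\log\frac{A}{P_Tf}+\alpha P_Tf\log \mathbb{E}\e^{M_T/\alpha},
\]
so the task reduces to estimating $\log\mathbb{E}\e^{M_T/\alpha}$.

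The central step is a self-referential exponential bound for $V_T$. Lemma \ref{s1.lem.expmart} with $p=2$ gives $\mathbb{E}\e^{M_T/\alpha}\le\sqrt{\mathbb{E}\e^{2V_T/\alpha^2}}$. Itô's formula $X_s^2=2\int_0^s X_r\,dX_r+[X]_s$ combined with a stochastic Fubini swap (justified since $\dot k^2\e^{K\cdot}$ is deterministic and bounded) yields
\[
V_T\le 2\int_0^T g(r) X_r\,dX_r+H^2,\qquad g(r):=\int_r^T\dot k(s)^2\e^{Ks}\,ds\le H.
\]
The local martingale $N_T:=\int_0^T g(r)X_r\,dX_r$ has bracket $[N]_T\le H^2 V_T$, and a second application of Lemma \ref{s1.lem.expmart} with $p=2$ yields
\[
\mathbb{E}\e^{2V_T/\alpha^2}\le\e^{2H^2/\alpha^2}\sqrt{\mathbb{E}\e^{32H^2V_T/\alpha^4}}.
\]
Imposing $\alpha\ge 4H$ makes $32H^2/\alpha^4\le 2/\alpha^2$, so with $\Phi:=\mathbb{E}\e^{2V_T/\alpha^2}$ this collapses to $\Phi\le \e^{2H^2/\alpha^2}\sqrt\Phi$, giving $\Phi\le\e^{4H^2/\alpha^2}$ and hence $\log\mathbb{E}\e^{M_T/\alpha}\le 2H^2/\alpha^2$ for every $\alpha\ge 4H$.

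Substituting back, the task becomes minimising $\alpha P_Tf\log(A/P_Tf)+2H^2 P_Tf/\alpha$ on $\{\alpha\ge 4H\}$. Following the $\alpha=\alpha_0+\beta$ splitting used for term {\rm I} in Lemma \ref{lem.s3.2} with $\alpha_0=4H$, the constant part contributes $4H P_Tf\log(A/P_Tf)$, while the AM--GM minimum over $\beta>0$ of $\beta X+(2H^2/\beta)Y$ in $X=P_Tf\log(A/P_Tf)$, $Y=P_Tf$ contributes $2\sqrt 2\,H P_Tf\sqrt{\log(A/P_Tf)}$. Summing reproduces exactly $2 P_Tf H\bigl(\sqrt{2\log(A/P_Tf)}+2\log(A/P_Tf)\bigr)$. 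The main obstacle is closing the fixed-point step: the second exponential-martingale application reintroduces an exponential of $V_T$ on the right, and the threshold $\alpha\ge 4H$ is dictated precisely by the requirement that the new coefficient $32H^2/\alpha^4$ be dominated by the original $2/\alpha^2$; a minor but necessary ingredient is the deterministic stochastic Fubini interchange used to rewrite the iterated integral as a single stochastic integral of $g(r)X_r$ against $dX_r$.
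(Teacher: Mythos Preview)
Your proof is correct and follows essentially the same strategy as the paper: apply Young's inequality, then use Lemma \ref{s1.lem.expmart} with $p=2$ twice together with It\^{o}'s formula for $X_s^2$ to obtain a self-closing exponential bound under the threshold $\alpha\ge 4H$, and finally optimise via the shift $\alpha\mapsto\alpha-4H$. The only cosmetic difference is that the paper takes $\sup_{s\in[0,T]}\mathbb{E}\e^{\beta X_s^2}$ as the self-referential quantity (via Jensen's inequality with the density $\e^{Ks}\dot k(s)^2/H$), whereas you work directly with $\Phi=\mathbb{E}\e^{2V_T/\alpha^2}$ after a stochastic Fubini interchange; both routes yield the identical bound $\log\mathbb{E}\e^{M_T/\alpha}\le 2H^2/\alpha^2$.
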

\begin{proof}
Using the Young inequality \eqref{ineq.gib}, we have for any $\alpha>0, $
\begin{align}\label{ineq.lem.main.s3.1}
& \mathbb{E}\lf[f(x_T^x)\int_0^T\int_0^s\Braket{Q_r(\dot{k}(r)v), \ptr_rdB_r}\Braket{Q_s(\dot{k}(s)w), \ptr_sdB_s}\r] \notag \\
& \leq \mathbb{E} \lf[ \alpha f (x_T^x) \log \frac{f(x_T^x)}{\mathbb{E}f(x_T^x)} \r] +\mathbb{E}[\alpha f(x_T^x)]\log\mathbb{E}\e^{\frac{1}{\alpha}\int_0^TX_s\Braket{Q_s(\dot{k}(s)w), \ptr_sdB_s}},
\end{align}
where $X_s=\int_0^s\Braket{Q_r(\dot{k}(r)v), \ptr_r dB_r}$. 
By Lemma \ref{s1.lem.expmart} for $p=2$ one has
\begin{equation}\label{ineq.s3.main.7}
    \aligned
\mathbb{E}\e^{\frac{1}{\alpha}\int_0^TX_s\Braket{Q_s(\dot{k}(s)w), \ptr_s dB_s}}&\leq \lf(\mathbb{E}\e^{\frac{2}{\alpha^2}\int_0^TX_s^2|Q_s(w)|^2\dot{k}(s)^2\,ds}\r)^{1/2}\\
&\leq \lf(\sup_{s\in [0,T]}\mathbb{E}\e^{\frac{2H}{\alpha^2} X_s^2}\r)^{1/2}.
    \endaligned
\end{equation}
Using Ito's formula for $X_s^2$ and the condition $(\mathbb{H}1)$, we find that  for any $s\in [0, T], $
\begin{equation}
\aligned
    X_s^2&\leq 2\int_0^s X_r\Braket{Q_r(\dot{k}(r)v), \ptr_rdB_r}+\int_0^T |Q_r(\dot{k}(r)v)|^2\,dr\\
    &\leq 2\int_0^s X_r\Braket{Q_r(\dot{k}(r)v), \ptr_rdB_r}+H.
    \endaligned
\end{equation}
Let $\beta =\frac{2H}{\alpha^2}$. Then 
\begin{equation}\label{term2-ineq1}
\aligned
\sup_{s\in [0,T]} \mathbb{E}\e^{\beta X_s^2}&\leq \sup_{s\in [0,T]}\mathbb{E}\e^{\beta H}\e^{2\beta \int_0^s X_r \Braket{Q_r(\dot{k}(r)v), \, \ptr_r dB_r}}\\
&\leq \e^{\beta H} \lf(\mathbb{E}\e^{8\beta^2\int_0^T X_r^2 \e^{Kr}\dot{k}(r)^2\, dr}\r)^{1/2}\\
&\leq \e^{\beta H}\lf(\sup_{s\in [0,T]}\mathbb{E}\e^{8\beta^2HX_s^2}\r)^{1/2}.
\endaligned
\end{equation}
Let $\beta \geq 8\beta^2 H, $ i.e.
$\alpha\geq 4H.$
Then
\begin{equation}\label{term2-ineq2}
  \sup_{s\in [0,T]}  \mathbb{E}\e^{\beta X_s^2}\leq \e^{2\beta H}.
\end{equation}
Combining this with \eqref{ineq.s3.main.7} yields
\begin{equation}
    \aligned
\mathbb{E}\e^{\frac{1}{\alpha}\int_0^TX_s\Braket{Q_s(\dot{k}(s)v), \, \ptr_s dB_s}}\leq \e^{\beta H},
    \endaligned
\end{equation}
which, together with \eqref{term2-ineq1}, further imply 
\begin{equation*}
 \aligned
&\mathbb{E}\lf[f(x_T^x)\int_0^T\int_0^s\Braket{Q_r(\dot{k}(r)v), \ptr_rdB_r}\Braket{Q_s(\dot{k}(s)w), \ptr_sdB_s}\r] \\
&\leq \inf_{\alpha\geq 4H} \lf \{\alpha P_Tf\log\frac{A}{P_Tf}+\frac{2H^2P_Tf}{\alpha}\r\}\\
&=\inf_{\alpha> 4H} \lf\{(\alpha-4H)P_Tf\log\frac{A}{P_Tf}+4Hu\log\frac{A}{P_Tf}+\frac{2H^2P_Tf}{\alpha-4H} \r\} \\
&=2Hu\sqrt{2\log\frac{A}{P_Tf}}+4Hu\log\frac{A}{P_Tf}.
\endaligned
\end{equation*}
We then end the proof.
\end{proof}

\begin{proof}[Proof of Theorem \ref{add-mainth1}]
Let $v=e_i, w=e_j$ where $\{e_i\}$ be  the orthonormal frame of $T_xM$. 
Combining the Bismut Hessian formula \eqref{for.glob.hess} with Lemmas \ref{lem.s3.2} and \ref{lem.s3.3}, we derive the following estimate for the Hessian:  
\begin{equation}\label{ineq.s3.main.3}
\frac{\Hess (P_Tf)_{ij}}{u}\leq   \sqrt{H}  \lf[ \lf(\frac{K_2}{2}\sqrt{2G T} +2K_1\sqrt{2G} +4\sqrt{2H} \r) \sqrt{\log \frac{A}{P_Tf}} +4(K_1\sqrt{G}+2\sqrt{H} )\log \frac{A}{P_Tf} \r].\end{equation}
Define the function \( k(s) = \frac{\int_s^T \e^{-Kr}\,dr}{\int_0^T \e^{-Kr}\,dr} \) for \( 0 \leq s \leq T \). By direct computation, we obtain:  
\begin{align}
    G&= \frac{\int_0^T \e^{Ks} (\int_s^T\e^{-Kr}\, dr)^2\, ds}{(\int_0^T \e^{-Kr}\, dr)^2}=\frac{ \e^{2TK}-1-2TK \e^{TK} }{  K(\e^{KT}-1)^2} \leq \frac{T}{3}, \label{inequality G} \\
      H&=\frac{-K}{\e^{-KT}-1}.\notag
\end{align}
(The proof of the inequality \eqref{inequality G} is deferred to the Appendix.)   
Observing that $u(t,x)=P_{2t}u(0,\cdot)(x)$ and substituting these bounds into \eqref{ineq.s3.main.3} with $T=2t$. Then we  clarify the upper bound as follows:  
\begin{align}\label{ineq.s3.main.6}
\frac{\Hess (u)_{ij}}{u} &\leq \sqrt{\frac{K}{3( 1-\e^{-2Kt})}} \lf [ K_2t \sqrt{2\log\frac{A}{u}} +2\lf(K_1\sqrt{2t}+ 2\sqrt{ \frac{3K}{1-\e^{-2Kt}}} \r) \lf(2\log\frac{A}{u} +\sqrt{2\log\frac{A}{u} }\r)\r], \end{align}
where $A=\sup_{x\in M}u(0,x)=\sup_{t\geq 0,x\in M} u(t,x)$.
\end{proof}

We now establish an improved Hessian estimate for solutions to heat-type equations, refining the previous result (\ref{eq:Hessian_estimate}) in \cite{han2016upper}. 
 \begin{corollary}\label{s2.cor.1}
Let  ${u}$ be a positive solution to  the heat equation 
\begin{equation}\label{eq.heat.2}
    \partial_t {u}=(\Delta+Z) {u}.
\end{equation}
Under the condition  $(\mathbf{H})$,   we have that for any $t>0$, 
\begin{equation}
    \frac{ \Hess (u)_{ij}}{ {u}}\leq (\sqrt{6}+2)\Big( B+\frac{1}{t}\Big) \lf(1+\log\frac{A}{u}\r),
\end{equation}
where $A=\sup_{[0,\infty)\times M} u$ and 
\[B:=\frac{\sqrt{6K_2}-\sqrt{3K_2}}{12}+\frac{\sqrt{6}}{3}K_1+2(K^+\vee K_2+K^+),\]
with $K^+ = \max\{K,0\}$.
\end{corollary}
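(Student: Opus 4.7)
The plan is to simplify the sharp estimate from Theorem~\ref{add-mainth1} into the announced form by applying the elementary bound
\[
\phi(t) := \frac{K}{1-\e^{-2Kt}} \leq K^+ + \frac{1}{2t},
\]
which follows from $\e^{2Kt}-1 \geq 2Kt$, and then absorbing all the $\sqrt{\log(A/u)}$-type factors into $(1+\log(A/u))$ via AM--GM inequalities. Setting $L = \log(A/u)$, my first move is to expand the bracket in \eqref{es.ham.hess.11} (equivalently the form in \eqref{ineq.s3.main.6}) into a sum of five monomials: two purely $\phi$-type terms $4\sqrt{2}\,\phi\sqrt{L}+8\phi L$, two with prefactor $K_1\sqrt{t\phi}$ (one carrying $\sqrt{L}$, one carrying $L$), and one with prefactor $K_2 t\sqrt{\phi}$ carrying $\sqrt{L}$.

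My second move handles the two $\phi$-monomials simultaneously through the sharp inequality
\[
4\sqrt{2}\,\sqrt{L} + 8L \leq 2(\sqrt{6}+2)(1+L), \qquad L\geq 0,
\]
obtained by maximizing the ratio $(4\sqrt{2}\sqrt{L}+8L)/(1+L)$. A short calculation (solving $y^2 - 2\sqrt{2}\,y - 1 = 0$ with $y = \sqrt{L}$) identifies the optimizer at $L = 5+2\sqrt{6}$, at which both sides equal $48+20\sqrt{6}$. This is precisely where the universal constant $\sqrt{6}+2$ in the Corollary is born. The remaining three monomials (carrying $K_1$ or $K_2$) are controlled by the cruder but sufficient $\sqrt{L} \leq (1+L)/2$ and $L \leq 1+L$, so that every term becomes of the form (coefficient)$\cdot(1+L)$.

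The final step is bookkeeping: I arrive at
\[
\frac{\Hess(u)_{ij}}{u} \leq (\sqrt{6}+2)\bigl(\text{Coeff}(t)\bigr)(1+L),
\]
and I must show $\text{Coeff}(t) \leq B + 1/t$. Applying $\phi \leq K^+ + 1/(2t)$ directly yields the $2K^+$ and $1/t$ pieces of $B + 1/t$. For the $K_1\sqrt{t\phi}$ contribution, I use $\sqrt{t\phi} \leq \sqrt{K^+ t}+1/\sqrt{2}$ (via $\sqrt{a+b}\leq\sqrt{a}+\sqrt{b}$), with the constant residual reorganizing into $\tfrac{\sqrt{6}}{3}K_1$. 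For the $K_2 t\sqrt{\phi}$ contribution, I use $t\sqrt{\phi}\leq\sqrt{K^+}\,t + \sqrt{t/2}$, after which one further weighted AM--GM trades the $t$-growing remainder against $K^+\vee K_2$ to produce the $2(K^+\vee K_2)$ piece of $B$ together with the small residual $\tfrac{\sqrt{6K_2}-\sqrt{3K_2}}{12}$. The main obstacle is this last matching: the constants must align exactly to give the announced $B$, and the $K_2$ term in particular needs $K^+\vee K_2$ as an intermediary; the only genuinely non-routine input is the sharp inequality in the second step, which pins down the prefactor $\sqrt{6}+2$.
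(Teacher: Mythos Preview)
Your sharp inequality $4\sqrt{2}\sqrt{L}+8L\le 2(\sqrt6+2)(1+L)$ is correct and is exactly how the constant $\sqrt6+2$ enters (the paper obtains the same thing via the AM--GM choice $\epsilon=\sqrt3-\sqrt2$). The gap is in your ``final bookkeeping'': working directly with the original time $t$, the coefficient you call $\mathrm{Coeff}(t)$ cannot be bounded by $B+1/t$. Indeed, from the $K_2$-monomial you produce a term proportional to $K_2\,t\sqrt{\phi(t)}$; since $\phi(t)=K/(1-\e^{-2Kt})\to K$ as $t\to\infty$ when $K>0$, this contributes $K_2\sqrt{K}\,t\to\infty$. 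The $K_1$-monomials similarly contribute $K_1\sqrt{K^+t}\to\infty$. No weighted AM--GM can convert a quantity that diverges in $t$ into a constant plus $1/t$; your proposed ``trade against $K^+\vee K_2$'' is not a valid step.

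The missing idea is a time cutoff via the semigroup property. The paper writes $u=P_{(2t)\wedge c}\big(P_{2t-(2t)\wedge c}u(0,\cdot)\big)$ and applies the Hessian estimate \eqref{ineq.s3.main.6} with the effective time $(2t)\wedge c$ in place of $2t$; since $\sup P_su(0,\cdot)\le A$ for all $s\ge0$, the constant $A$ is unchanged. This caps the dangerous factors $t$ and $\sqrt{t}$ at $c/2$ and $\sqrt{c/2}$ respectively, while $\tfrac{1}{(2t)\wedge c}\le \tfrac{1}{2t}+\tfrac{1}{c}$. Choosing $c=(K^+\vee K_2)^{-1}$ then makes the $K_2$-piece bounded by $\sqrt{K_2/6}\,\sqrt{L}$ and produces the $2(K^+\vee K_2)$ summand in $B$ from the $1/c$ term; your AM--GM reductions for $\sqrt{L}$ and $L$ can then be applied exactly as you describe. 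In short: keep your second step, but precede the bookkeeping with the replacement $2t\mapsto(2t)\wedge c$ and optimize over $c$.
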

\begin{proof}
We first observe the elementary inequality for $t>0$:
\begin{align}\label{est-eK}
\frac{K}{1-\e^{-2Kt}} = K + \frac{K}{\e^{2Kt}-1} \leq \frac{1}{2t} + K^+.
\end{align}
Using this estimate to bound \eqref{ineq.s3.main.6} and recalling that the semigroup satisfies the composition property $u = P_{2t}u(0,\cdot)= P_{(2t)\wedge c} \circ P_{2t-(2t)\wedge c}u(0,\cdot)$,
we  achieve the following upper bound by replacing   $2t$ by $(2t)\wedge c$ in \eqref{ineq.s3.main.6}.
\begin{equation}\label{ineq-c}
\aligned
\frac{\Hess (u)_{ij}}{u}&\leq  K_2\sqrt{\frac{c+K^+c^2}{6}\log\frac{A}{u}}+4\lf(\frac{K_1}{2}\sqrt{\frac{1+K^+c}{3}}+\frac{1}{(2t)\wedge c}+K^+\r) \lf(\sqrt{2\log\frac{A}{u}}+2\log\frac{A}{u}\r)
\endaligned
\end{equation}
for any  $c>0$, where $A=\sup_{[0,\infty)\times M} u$ and  $K^+ = \max\{K,0\}$. 
Applying  this inequality with parameters \( c = \frac{1}{ K^+ \vee K_2 }\),  we obtain 
  \begin{equation}\label{version-Hessian3}
\aligned
\frac{\Hess (u)_{ij}}{u}
&\leq  \sqrt{ \frac{K_2}{6} \log \frac{A}{u}}+2\lf(\sqrt{\frac{2}{3}}K_1+2(K^+\vee K_2+K^+)+\frac{1}{t}\r) \lf(\sqrt{2\log\frac{A}{u}}+2\log\frac{A}{u}\r).
\endaligned
\end{equation}
To further estimate these terms, we apply the mean value inequality: for any \( \epsilon > 0 \),
\begin{align*}
\sqrt{2\log \frac{A}{u}} + 2\log \frac{A}{u} \leq \frac{\sqrt{2}}{2\epsilon} + \frac{\sqrt{2}\epsilon}{2}\log\frac{A}{u} + 2\log \frac{A}{u}.
\end{align*}
Choosing the optimal value \( \epsilon = \sqrt{3} - \sqrt{2} \) yields
\begin{align}\label{est-1}
\sqrt{2\log \frac{A}{u}} + 2\log \frac{A}{u} \leq \frac{\sqrt{6} + 2}{2}\left(1 + \log \frac{A}{u}\right).
\end{align}
Additionally, we have
\begin{align}\label{est-2}
\sqrt{\frac{K_2}{6}\log \frac{A}{u}} \leq \frac{\sqrt{6K_2}}{12}\left(1 + \log \frac{A}{u}\right).
\end{align}
By  these inequalities \eqref{est-1}, \eqref{est-2}  and  \eqref{version-Hessian3},   we then end the proof.

\end{proof}

\section{Applications}\label{s4}

As shown in \cite{han2016upper}, Hessian matrix estimates of the form \eqref{es.ham.hess.11} yield a backward Harnack inequality. This enables the temperature at a point \( x \) at a later time \( t \) to be bounded by its value at the same point \( x \) at an earlier time \( s \).    
\begin{corollary}\label{add-cor-1}
Assume that  the condition $(\mathbf{H})$ holds.
Let  $u$ be a positive  solution to the  heat equation \eqref{H-E}.
For  $t>s>0$ and $x\in M$,  one has
\begin{equation}
  u(t,x)\leq \e^{\gamma}A^{1-\eta}u(s,x)^\eta
\end{equation}
where  $A= \sup_{t>0,x\in M}u(t,x)$, 
\begin{equation}
    \gamma=2d^2\lf(\int_s^t \lf( \Big(\frac{K_2\sqrt{2r}}{4}+K_1\Big)\sqrt{\frac{2Kr}{ 3(1-\e^{-2Kr})}} +\frac{K}{1-\e^{-2Kr}}\r)\, dr\r)^2
\end{equation}
and
\begin{equation}
    \eta=\frac{1}{2}\exp\left\{-4d\int_s^t \lf(K_1\sqrt{\frac{2Kr}{ 3(1-\e^{-2Kr} )}}+ \frac{K}{1-\e^{-2Kr}}\r)\,dr\right\}.
\end{equation}
\end{corollary}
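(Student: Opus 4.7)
The plan is to reduce the backward Harnack inequality to a lower bound for $L(r):=\log(A/u(r,x))$ at the fixed spatial point $x$; note $L\geq 0$ since $A=\sup u$. Taking logarithms, the target $u(t,x)\leq \e^{\gamma}A^{1-\eta}u(s,x)^{\eta}$ is equivalent to $L(t)\geq \eta L(s)-\gamma$. To control $L$ in time, I sum the Hessian matrix estimate \eqref{es.ham.hess.11} over a local orthonormal frame, so that $\Delta u/u=\sum_{i=1}^{d}\mathrm{Hess}(u)_{ii}/u$ is bounded by $d$ times the right-hand side of \eqref{es.ham.hess.11}. Expanding and grouping in powers of $\sqrt{L}$ gives
\[
\frac{\Delta u}{u} \leq \Phi_1(r)\,L(r)+\Phi_2(r)\,\sqrt{L(r)},
\]
with explicit non-negative functions $\Phi_1,\Phi_2$ that each carry a factor $d$ from the trace.

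Next, since $u$ solves \eqref{H-E}, $L'(r)=-\partial_r u/u=-\Delta u/u-\langle Z,\nabla\log u\rangle$. The Hessian-trace bound controls $-\Delta u/u$, while by Cauchy--Schwarz and the gradient estimate \eqref{eq:Li_Harnack} the drift contributes at the $\sqrt{L}$ level and is absorbed into $\Phi_2$. This yields
\[
L'(r)\geq -\Phi_1(r)\,L(r)-\Phi_2(r)\,\sqrt{L(r)}.
\]
The key linearisation is the substitution $F(r):=\sqrt{L(r)}$: using $L'=2FF'$, the above becomes, on $\{F>0\}$,
\[
F'(r)\geq -\tfrac{1}{2}\Phi_1(r)\,F(r)-\tfrac{1}{2}\Phi_2(r),
\]
and the integrating factor $\exp\bigl(\tfrac{1}{2}\int_s^r\Phi_1\,d\tau\bigr)$ then gives
\[
F(t)\geq F(s)\,\e^{-\tfrac{1}{2}\int_s^t\Phi_1\,d\tau}-\tfrac{1}{2}\int_s^t\Phi_2(r)\,\e^{-\tfrac{1}{2}\int_r^t\Phi_1\,d\tau}\,dr.
\]

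Finally, squaring this lower bound via the elementary inequality $(X-Y)^2\geq \tfrac{1}{2}X^2-Y^2$ (valid for all real $X,Y$) produces
\[
L(t)\geq \tfrac{1}{2}\,\e^{-\int_s^t\Phi_1\,d\tau}\,L(s)-\Bigl(\tfrac{1}{2}\int_s^t\Phi_2(r)\,\e^{-\tfrac{1}{2}\int_r^t\Phi_1\,d\tau}\,dr\Bigr)^{2},
\]
which is exactly $L(t)\geq \eta L(s)-\gamma$ with $\eta$ and $\gamma$ in the stated form: the explicit factor $\tfrac{1}{2}$ in $\eta$ is produced by the squaring step, and bounding the interior exponentials by $1$ gives the clean integrand displayed for $\gamma$.

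The main obstacle is the nonlinear $\sqrt{L}$ term coming from the subleading part of the Hessian bound: the substitution $F=\sqrt{L}$ is the natural linearisation, and the factor $\tfrac{1}{2}$ in $\eta$ appears precisely from $(X-Y)^2\geq \tfrac{1}{2}X^2-Y^2$. The remainder is essentially bookkeeping: identifying $\Phi_1$ and $\Phi_2$ with the integrands inside $\eta$ and $\gamma$ by expanding \eqref{es.ham.hess.11} and collecting powers of $\sqrt{L}$, and checking that the $Z$-drift term is indeed dominated by the $\sqrt{L}$-piece through the gradient estimate \eqref{eq:Li_Harnack}.
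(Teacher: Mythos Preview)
Your approach is essentially identical to the paper's: bound $\partial_t u/u$ by $d$ times the right-hand side of \eqref{es.ham.hess.11}, pass to the linear differential inequality $-\partial_r\sqrt{\log(A/u)}\le a_r+b_r\sqrt{\log(A/u)}$ (your substitution $F=\sqrt{L}$ made explicit), apply Gronwall, and square via $(X-Y)^2\ge\tfrac{1}{2}X^2-Y^2$ after bounding the interior exponentials by $1$. The only point of divergence is cosmetic: the paper simply invokes $\Delta u\le d\,|\Hess u|$ and does not isolate the $Z$-drift, whereas you propose to control $\langle Z,\nabla\log u\rangle$ through \eqref{eq:Li_Harnack}; be aware that carrying this through would insert a $\|Z\|_\infty$-dependent term into $\Phi_2$ that is absent from the stated $\gamma$, so to recover the exact constants you should follow the paper and drop this term (effectively reading the corollary with $Z=0$).
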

\begin{proof}
Using Theorem \ref{add-mainth1} and the inequality $\Delta u \leq d |\Hess u|$, we  first obtain
\begin{align}\label{add-eq2}
   -\partial_t\sqrt{\log\frac{A}{u}} & \leq \frac {d}{2} \sqrt{\frac{2K}{3( 1-\e^{-2Kt})}} \lf [ \frac{1}{4}K_2t+\frac{\sqrt{2t}}{2}K_1+\sqrt{ \frac{3K}{1-\e^{-2Kt}} }  +\lf(K_1\sqrt{t}+ \sqrt{ \frac{6K}{1-\e^{-2Kt}}} \r) \sqrt{ \log\frac{A}{u} }\r].
\end{align}
To simplify the analysis, we define the time-dependent coefficients:
\begin{align*}
a_t&=\sqrt{2}d\lf[\lf( \frac{K_2\sqrt{2t}}{4}+K_1\r)\sqrt{\frac{2Kt}{ 3(1-\e^{-2Kt})}} +\frac{K}{1-\e^{-2Kt}}\r], 
\\
    b_t&=2d \lf(K_1\sqrt{\frac{2Kt}{3( 1-\e^{-2Kt})}}+ \frac{ K}{1-\e^{-2Kt}} \r).
\end{align*}
An application of  Gronwall's  inequality to the differential inequality \eqref{add-eq2} yields
\begin{equation*}
    -\sqrt{\log\frac{A}{u_t}}-\int_s^t \e^{-\int_r^t b_u\, du}a_r\, dr\leq -\sqrt{\log\frac{A}{u_s}}\exp\left\{-\int_s^tb_r\,dr\right\},
\end{equation*}
which  implies 
\begin{align*}
\lf(\log \frac{A}{u_s} \r)\exp\left\{-2\int_s^tb_r\,dr\right\}\leq 2 \log \frac{A}{u_t}+2\lf( \int_s^t a_r\, dr\r)^2.
\end{align*}
We then end the proof.

\end{proof}

\begin{remark}
Because of $\frac{2Kr}{ 1-\e^{-2Kr}}\leq 2K^+r+1$, we have the following inequalities to estimate $\gamma$ and $\eta$,
\begin{align*}
 \gamma &\leq 2d^2\lf(\int_s^t \lf( \frac{\sqrt{3}}{12}\big(K_2\sqrt{ 2r }+4K_1 \big) \sqrt{ 1+2K^+ r} +\frac{K}{1-\e^{-2Kr}}\r)\, dr\r)^2\\
      &\leq 2d^2\lf(\int_s^t \lf(  \frac{\sqrt{3}}{12} \big(K_2\sqrt{ 2r }+4K_1 \big) (1+\sqrt{2K^+r}) +\frac{K}{1-\e^{-2Kr}}\r)\, dr\r)^2\\
      &= 2d^2\lf( \frac{\sqrt{6}}{18} \lf( K_2  +4K_1\sqrt{K^+} \r) (t^{\frac{3}{2}} -s^{\frac{3}{2}}) +\frac{K_2\sqrt{3K^+}}{12}(t^2-s^2)+\frac{\sqrt{3}}{3}K_1(t-s)+\frac{1}{2}\log \frac{\e^{2Kt}-1}{\e^{2Ks}-1}\r)^2\\
      &:=\tilde{\gamma},
\end{align*}
and 
\begin{align}
    \eta&\geq   \frac{1}{2}\exp\left\{-4d \int_s^t \lf(\frac{\sqrt{3}}{3}K_1 \sqrt{1+2K^+r} + \frac{K}{1-\e^{-2Kr}}\r)\,dr\right\} \notag \\
    &= \frac{1}{2} \e^{\frac{4d\sqrt{3}}{3}K_1 (s-t) + \frac{8d\sqrt{6}}{9}K_1 \sqrt{K^+} (s^{3/2}-t^{3/2})} \lf( \frac{\e^{2Ks}-1}{\e^{2Kt}-1}\r)^{2d}:=\tilde{\eta}.
\end{align}
Then from Corollary  \ref{add-cor-1}, for  $t>s>0$ and  $x\in M$,  one has
\begin{equation*}
  u(t,x)\leq \e^{\tilde{\gamma}}A^{1- \tilde{\eta}}u(s,x)^{\tilde{\eta}}.
\end{equation*}

\end{remark}

We now investigate Hessian estimates for eigenfunctions of the operator \( L \). Given an eigenvalue pair \((\phi, \lambda) \in \text{Eig}(\Delta+Z)\), the eigenfunction \(\phi\) satisfies  
\[
(\Delta +Z)\phi = \lambda \phi.
\]  
In \cite{CTW}, the uniform Hessian estimate of $\phi$ has been given. Here, we employ the Hessian matrix estimate from Theorem \ref{add-mainth1} to derive the following pointwise bound for \( \Hess ( \phi ) \).

\begin{corollary}\label{hessian-eigenfunction}
   Let $\lambda_1$ be the first eigenvalue of $L$.  Assume that the condition $(\mathbf{H})$ holds.   For $(\phi, \lambda)\in \text{ \rm Eig}(L)$ and  for any $x\in M$, 
\begin{equation}\label{point-eq}
    \frac{\Hess( \phi) _{ij} (x)}{\phi(x)} \leq (2+\sqrt{2}) \lf( \frac{ K_2 }{4} \sqrt{\frac{1}{3\lambda_1} + \frac{K^+}{3\lambda_1^2}}+2K_1 \sqrt{\frac{1}{3}+\frac{K^+}{3\lambda_1}}+2( \lambda +K^+ )\r)  \lf(1+ \log\frac{\|\phi\|_{\infty}}{\phi (x)}\r).
\end{equation}

\end{corollary}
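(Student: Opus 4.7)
The plan is to turn the eigenfunction into an explicit heat-equation solution and then invoke the quantitative Hessian estimate \eqref{ineq-c} from the proof of Corollary~\ref{s2.cor.1}. Setting $u(t,x):=\e^{\lambda t}\phi(x)$, one checks directly that $\partial_t u=\lambda u=(\Delta+Z)u$, so $u$ is a positive solution of \eqref{H-E}. Because $\Delta+Z$ has non-positive spectrum in the natural $L^2$ sense (so $\lambda\leq 0$), the function $u$ is monotonically decreasing in $t$ and $A:=\sup_{t\geq 0,\,x\in M}u(t,x)=\|\phi\|_{\infty}$. Two elementary identities follow: $\Hess(u)_{ij}(t,x)/u(t,x)=\Hess(\phi)_{ij}(x)/\phi(x)$ (independent of $t$, since the scalar factor $\e^{\lambda t}$ cancels), and $\log(A/u(t,x))=-\lambda t+\log(\|\phi\|_{\infty}/\phi(x))$.

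Next, I would apply \eqref{ineq-c} to $u$ with the free parameters chosen as $c=1/\lambda_1$ and $t=1/(2|\lambda|)$. Because $|\lambda|\geq\lambda_1$, one has $2t=1/|\lambda|\leq 1/\lambda_1=c$, hence $(2t)\wedge c=2t$ and so $\tfrac{1}{(2t)\wedge c}+K^+=|\lambda|+K^+$. With the same $c$,
\[
\sqrt{\tfrac{c+K^+c^2}{6}}=\tfrac{1}{\sqrt 6}\sqrt{\tfrac{1}{\lambda_1}+\tfrac{K^+}{\lambda_1^2}},\qquad \sqrt{\tfrac{1+K^+c}{3}}=\tfrac{1}{\sqrt 3}\sqrt{1+\tfrac{K^+}{\lambda_1}},
\]
which match exactly the two square-root factors on the right-hand side of \eqref{point-eq}. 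Moreover the logarithmic shift reduces to the constant $-\lambda t=|\lambda|t=1/2$, so $\log(A/u(t,\cdot))=1/2+L$ with $L:=\log(\|\phi\|_{\infty}/\phi(x))$.

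Finally, the substituted right-hand side of \eqref{ineq-c} becomes a combination of $\sqrt{L+\tfrac12}$ together with $\sqrt{2L+1}+(2L+1)$. I would bound these using $\sqrt{a+b}\leq\sqrt a+\sqrt b$, the AM-GM bound $2\sqrt L\leq 1+L$, and a Young-type estimate analogous to \eqref{est-1} but with the Young parameter $\epsilon$ retuned to absorb the constant shift $1/2$, so that every piece ultimately takes the form $[\mathrm{const}]\cdot(1+L)$. A globally optimized choice of $\epsilon$ then gives the sharp prefactor $(2+\sqrt 2)$, while the coefficient $|\lambda|+K^+$ from $1/((2t)\wedge c)+K^+$ produces the advertised term $2(\lambda+K^+)$ inside the bracket (with the sign convention that identifies $\lambda$ in \eqref{point-eq} with $|\lambda|$).

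The main obstacle is this final algebraic consolidation: the three separate pieces (the $K_2$-, $K_1$-, and $(|\lambda|+K^+)$-terms) each get multiplied by $\sqrt{L+\tfrac12}$ or $\sqrt{2L+1}+(2L+1)$, and a single Young-parameter $\epsilon$ must be orchestrated so that the resulting bound collapses to exactly $(2+\sqrt 2)[\,\cdots+2(\lambda+K^+)\,](1+L)$ without residual cross-terms or a worse prefactor. Tracking the arithmetic so that the shift $1/2$ cancels cleanly against the target prefactor $(2+\sqrt 2)$, rather than the $(\sqrt 6+2)$ that appeared in Corollary~\ref{s2.cor.1}, is the delicate point.
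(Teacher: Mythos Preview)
Your overall strategy---turn $\phi$ into a heat solution and feed it into a quantitative Hessian bound with time chosen proportional to $1/\lambda$---is exactly the paper's idea, but your specific starting point and parameter choices make the final consolidation genuinely fail, not just ``delicate.'' The paper does \emph{not} use \eqref{ineq-c}; it goes back to the estimate of Theorem~\ref{add-mainth1} itself (essentially \eqref{es.ham.hess.11}), plugs in $f=\phi$ so that $\log(A/P_Tf)=2\lambda T+L$ with $L=\log(\|\phi\|_\infty/\phi)$, and sets $T=\tfrac{1}{2\lambda}$ so the logarithmic shift is exactly~$1$. The whole point of shift~$1$ is that the single crude inequality $\sqrt{1+L}\le 1+L$ immediately converts the bound into a multiple of $(1+L)$, and the prefactor $(2+\sqrt2)$ drops out with no Young-parameter tuning at all. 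A separate monotonicity step at the end then replaces $\lambda$ by $\lambda_1$ in the $K_1$- and $K_2$-coefficients.

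Your route through \eqref{ineq-c} with $c=1/\lambda_1$ and $t=1/(2|\lambda|)$ forces the log-shift to be $1/2$, and the factor $\sqrt{1+2L}+1+2L$ that then multiplies the $(|\lambda|+K^+)$-term simply cannot be bounded by $\tfrac{2+\sqrt2}{2}(1+L)$: at $L=0$ the left side equals $2$ while the right side is $1+\tfrac{\sqrt2}{2}\approx 1.71$. No choice of Young parameter $\epsilon$ repairs this, because the leading $4$ in \eqref{ineq-c} was already an artefact of the passage from \eqref{ineq.s3.main.6} to Corollary~\ref{s2.cor.1}. So from \eqref{ineq-c} you can only reach a strictly weaker constant; to get the stated $(2+\sqrt2)$ you must return to Theorem~\ref{add-mainth1}'s estimate before that simplification and use the shift-$1$ trick together with $\sqrt{1+L}\le 1+L$.
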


\begin{proof}

If $f=\phi$, then $P_Tf= \e^{-2\lambda T}\phi$, due to the proof of Theorem \ref{add-mainth1}, we compute that
\begin{align*}
&\frac{\Hess( \phi) _{ij} (x) }{\phi(x)}\leq \sqrt{ \frac{2K}{ 3(1-\e^{-2KT})}\lf(2\lambda T+ \log\frac{\|\phi\|_{\infty}}{\phi(x)}\r)} \\
&\qquad \qquad  \quad \times \, \lf [ K_2T+2\lf(\sqrt{2T}K_1+\sqrt{ \frac{12 K}{1-\e^{-2KT}} }\r) \lf(1+\sqrt{2 \lf(2\lambda T+ \log\frac{\|\phi\|_{\infty}}{\phi(x)}\r)} \r) \r].
\end{align*}
Using the inequality \( \frac{K}{1 - \e^{-2KT}} \leq \frac{1}{2T} + K^+ \) and setting \( T = \frac{1}{2\lambda} \), we simplify the estimate to  
\begin{align*}
\frac{\Hess( \phi) _{ij} (x)}{\phi(x)}&\leq 2 \sqrt{\frac{2}{3}\lf( \lambda + K^+\r)\lf(1+ \log\frac{\|\phi\|_{\infty}}{\phi(x)}\r)} \\
&\qquad \times \, \lf [ \frac{K_2 }{4\lambda}+\frac{K_1}{\sqrt{\lambda}}+2\sqrt{ 3( \lambda +K^+) }+\lf(\frac{K_1}{\sqrt{\lambda}}+2\sqrt{3(  \lambda +K^+ )}\r) \sqrt{2 \lf(1+ \log\frac{\|\phi\|_{\infty}}{\phi(x)}\r)}  \r]\\
&\leq  (\sqrt{2}+2)\sqrt{\lambda + K^+}  \lf( \frac{ (\sqrt{2}-1)K_2 }{2\sqrt{3}\lambda}+\frac{2K_1 }{\sqrt{3\lambda} }+2\sqrt{ \lambda +K^+ }\r)  \lf(1+ \log\frac{\|\phi\|_{\infty}}{\phi(x)}\r),
\end{align*}
where the second inequality is derived as 
$$ \sqrt{1+ \log\frac{\|\phi\|_{\infty}}{\phi}} \leq  1+ \log\frac{\|\phi\|_{\infty}}{\phi}.$$
Observing that the term  
\[
\sqrt{\lambda+K^+}\lf( \frac{ (\sqrt{2}-1)K_2 }{2\sqrt{3}\lambda}+\frac{2K_1 }{\sqrt{3\lambda} }\r)
\]  
is monotonically decreasing in \(\lambda\), we may replace \(\lambda\) with \(\lambda_1\) (the first eigenvalue) and use the inequality $\frac{\sqrt{2}-1}{2}\leq \frac{1}{4}$ to obtain the inequality \eqref{point-eq}.  

\end{proof}

\begin{remark}

Since the elementary inequality  
\[
\phi(x)\left(1 + \log \frac{\|\phi\|_{\infty}}{\phi(x)}\right) \leq \|\phi\|_{\infty}  
\]  
holds for all \( x \in M \), the pointwise estimate \eqref{point-eq} immediately yields the following uniform Hessian bound:  
\begin{equation}\label{uni-eq}  
\|\Hess \phi\|_{\infty} \leq (2 + \sqrt{2}) \left( \frac{K_2}{4} \sqrt{\frac{1}{3\lambda_1} + \frac{K^+}{3\lambda_1^2}} + 2K_1 \sqrt{\frac{1}{3} + \frac{K^+}{3\lambda_1}} + 2(\lambda + K^+) \right) \|\phi\|_{\infty}.  
\end{equation}  
\end{remark}

\section{Appendix}
\begin{proof}[Proof of Inequality \eqref{inequality G}]
For the case when $K$ approaches $0$, we verify that:
$$ \lim_{K\rightarrow 0}\frac{ \e^{2TK}-1-2TK \e^{TK} }{  K(\e^{KT}-1)^2} - \frac{T}{3} =0.$$ 
When $K\neq 0$, 
we need to  prove 
\begin{align*}
 \frac{ \e^{2TK}-1-2TK \e^{TK} }{  K(\e^{KT}-1)^2} - \frac{T}{3} 
&= \frac{(3-TK) \e^{2TK}-3-4TK \e^{TK}-TK}{3K(\e^{KT}-1)^2}\leq 0.
\end{align*}
This reduces to analyzing the function 
$G(x):=(3-x)\e^{2x}-3-4x\e^x-x,$
where $x = TK$, with different inequality directions depending on the sign of $K$:
\begin{align}\label{add-G}
G(x)&=
\begin{cases}
(3-x)\e^{2x}-3-4x\e^x-x\leq 0, & \text{when } K>0\,  \text{and}\,  x\geq 0;  \\
(3-x)\e^{2x}-3-4x\e^x-x\geq 0, & \text{when } K<0\,  \text{and}\,  x\leq 0. 
\end{cases}
\end{align}

Assume that  $K>0$.  Since $G(0)=G'(0)=0$,
then to prove $G(x)\leq 0$ for $x\geq 0$, it  suffices to show that $G''(x)\leq 0$ for $x\geq 0$.  Computing
second derivative of $G$:
$$G''(x)=-4\e^{2x}+4(3-x)\e^{2x}-4x\e^x-8\e^x=4\e^x\lf((2-x)\e^x-x-2 \r),$$ 
we define $H(x):=(2-x)\e^x-x-2$. Then  it is easy to verify that  $H(0)=H'(0)=0$ and for $x\geq 0$,
$$H''(x)=-x\e^{x}\leq 0,$$
 which implies that $H(x)\leq 0$ for $x>0$, and thus  $G''(x)\leq 0$ for $x\geq 0$. 
 Therefore, when $K>0$,  $G(x)\leq 0$ for $x\geq 0$.
 
 Using  a similar argument (considering $x\leq 0$)
 we can prove the second inequality in \eqref{add-G} for the case $K<0$. 
 
\end{proof}

\providecommand{\bysame}{\leavevmode\hbox to3em{\hrulefill}\thinspace}
\providecommand{\MR}{\relax\ifhmode\unskip\space\fi MR }
\providecommand{\MRhref}[2]{%
  \href{http://www.ams.org/mathscinet-getitem?mr=#1}{#2}
}
\providecommand{\href}[2]{#2}

\end{document}